\theoremstyle{plain}
\newtheorem{thm}{Theorem}[section]
\newtheorem{lem}[thm]{Lemma}
\newtheorem{cor}[thm]{Corollary}
\newtheorem{defn-lem}[thm]{Definition-Lemma}
\newtheorem{prop}[thm]{Proposition}
\theoremstyle{definition}
\newtheorem{defn}[thm]{Definition}
\newtheorem{ex}[thm]{Example}
\def\md #1#2#3#4#5 {\left(
                        \begin{matrix}
             #1 & #2 \\
             #3 & #4
                        \end{matrix}
                      \right)- #5}
\def\ma #1#2#3#4 {\left(
                        \begin{matrix}
             #1 & #2 \\
             #3 & #4
                        \end{matrix}
                      \right)}
\def\dist{\operatorname{dist}}
\def\Index{\operatorname{Index}}
\def\tsr{\operatorname{tsr}}
\def\id{\operatorname{id}}
\def\Cu{\operatorname{Cu}}
\def\TA{\operatorname{TA}}
\def\CAR{\operatorname{CAR}}
\newcommand{\mc}{\mathcal}
\begin{document}
\title [A tracially sequentially-split $\sp*$-homomorphisms between $C\sp*$-algebras]
       { Tracially sequentially-split $\sp*$-homomorphisms between $C\sp*$-algebras}

\begin{abstract}
We define a tracial analog of the sequentially split $*$-homomorphism between $C\sp*$-algebras of Barlak and Szab\'{o} and show that several important approximation properties related to the classification theory of $C\sp*$-algebras pass from the target algebra to the domain algebra.  Then we show that this framework arises from tracial Rokhlin type properties of $C\sp*$-algebras with respect to a finite group action and an inclusion of $C\sp*$-algebras.
\end{abstract}

\author { Hyun Ho \, Lee and Hiroyuki Osaka}

\address {Department of Mathematics\\
          University of Ulsan\\
         Ulsan, South Korea 44610 \\
 and \\
School of Mathematics\\ Korea Institute of Advanced Study\\
Seoul, South Korea, 130-772 }
\email{hadamard@ulsan.ac.kr}

\address{Department of Mathematical Sciences\\
Ritsumeikan University\\
Kusatsu, Shiga 525-8577, Japan}
\email{osaka@se.ritsumei.ac.jp}

\keywords{Tracially sequentially-split $*$-homomorphism, Crossed product $C\sp*$-algebras, $\mc{Z}$-absorption, Inclusion of $C\sp*$-algebras}

\subjclass[2010]{Primary:46L55. Secondary:47C15, 46L35}
\date{}
\thanks{The first author's research was supported by Basic Science Research Program through the National Research Foundation of Korea(NRF) funded by the Ministry of Education(NRF-2018R1D1A1B07050924). \\
The second author's research was partially supported by the JSPS grant for Scientific Research No. 17K05285.}
\maketitle

\section{Introduction}

 Related to Elliott's classification program, the local approximation by a class of $C\sp*$-algebras has replaced the inductive limit construction. More generally, a tracial version of local approximation, which roughly means a uniform decomposition of finite elements into a building block in the designated class and a tracially small part, has been initiated by H. Lin \cite{Lin:tracial} (We refer the reader to \cite{EN} for the formal definition).   Inspired by  a successful classification of tracially AF $C\sp*$-algebras \cite{Lin:classification} a tracial version of Rokhlin property for a finite group action on a unital $C\sp*$-algebra  was suggested by N. C. Phillips \cite{Phillips:tracial} based on Izumi's strict Rokhlin property of a finite group action \cite{Izumi:finite} and the second named author and T. Teruya \cite{OT1} formulated the tracial Rokhlin property  for an inclusion of unital $C\sp*$-algebras $P\subset A$ of index-finite type; the former is concerned with the crossed product by an action which is  larger than the original algebra and the latter is concerned with a subalgebra which is smaller than the original algebra. Nonetheless, it would be desirable to have a framework to deal with both actions and inclusions  together.  For the strict Rokhlin property, such a framework has already appeared in the name of sequentially split $*$-homomorphism between $C\sp*$-algebras \cite{BS}. Following the idea of  \cite{Lin:tracial, Phillips:tracial, OT1} we suggest a tracial version of the sequentially split $*$-homomorphism between unital $C\sp*$-algebras using the the central sequence algebra and Cuntz subequivalence relation.

One of most desirable regularity properties in classification of nuclear simple $C\sp*$-algebras is $\mc{Z}$-absorption and it has been known that a crossed product of $\mc{Z}$-absorbing $C\sp*$-algebra by a finite group action  with the tracial Rokhlin property  is again $\mc{Z}$-absorbing and the target algebra of a conditional expectation with a tracial Rokhlin property is $\mc{Z}$-absorbing if the domain algebra is $\mc{Z}$-absorbing.  Encompassing these results we show that if there is a tracially sequentially split $*$-homomorphism from a simple unital $C\sp*$-algebra $A$  to a simple unital $C\sp*$-algebra $B$ and $B$ is $\mc{Z}$-absorbing then $A$ is $\mc{Z}$-absorbing too. Moreover we  investigate whether other approximation properties pass from $B$ to $A$. Our main results are as follows; suppose that there is a tracially sequentially split $*$-homomorphism from a unital separable  simple $C\sp*$-algebra $A$ to a unital separable simple $C\sp*$-algebra $B$. If $B$ satisfies the following properties, then $A$ does too. 
\begin{itemize}
\item finite or stable rank one
\item real rank zero
\item strict comparison 
\item tracially approximated by a class of semi-projective $C\sp*$-algebras, in particular  tracial topological rank zero.  
\item tracially  $\mc{Z}$-absorbing
\end{itemize}

We would like to point out some remarks related to our notion. Though our notion about the tracially sequentially-split $*$-homomorphism is obtained by weakening the rigidity of an approximate left inverse, the arguments and methods are substantially different with ones used in \cite{BS}. In addition, to show that approximation properties pass from the target algebra to the domain algebra we assume that the target algebra is simple in most cases. We do not know whether our results still hold even if we drop the simplicity of target algebras though we could provide a counterexample for the stable rank one case (see Example \ref{E:tsr} below).  We also note that our notion is not well suited to other local approximation properties such as nuclear dimension less than $n$, which are stable under a functorial operation $\infty: (A_{\infty}) \to (A_{\infty})_{\infty}$ in \cite{BS}, thus more algebraic in nature.  
   
\section{Tracially sequentially-split homomorphism between $C\sp*$-algebras and permanence properties}
For $A$  a $C\sp*$-algebra, we let $A^{+}$ be the cone of positive elements including $0$ and set 
\[c_0(\mathbb{N}, A)=\{(a_n)_n\mid \lim_{n\to \infty}\|a_n \|=0 \} \]
\[l^{\infty}(\mathbb{N}, A)=\{ (a_n)_n \mid  \text{a sequence}\, (\|a_n\|)_n \,\text{bounded}  \}\]
Then we denote by $A_{\infty}=l^{\infty}(\mathbb{N}, A)/c_0(\mathbb{N}, A)$ the sequence algebra of $A$ with the norm  of $a \in A_{\infty}$ given by $\limsup_n \|a_n\| $, where $(a_n)_n$ is a representing sequence of $a$ or $a=[(a_n)_n]$. We can embed $A$ into $A_{\infty}$ as a constant sequence, and we denote the central sequence algebra of $A$ by 
\[A_{\infty} \cap A'.\]
For an automorphism of $\alpha$, we also denote by $\alpha_{\infty}$ the induced automorphism on $A_{\infty}$. 

In this paper we are interested in a class $\mc{C}$ closed under some conditions that could be varied by different purposes through the following formulation due to H. Lin. 

 \begin{defn} \cite{EN}\label{D:TAS}
A simple unital $C\sp*$-algebra $A$ belongs to the class $\TA \mc{C}$ if for every finite set $\mc{F}\subset A$ and every $\epsilon>0$, and every nonzero $a\in A^{+}$, there exist a projection $p\in A$ and a $C\sp*$-subalgebra $C (\subset A)$ in $\mc{C}$ with $1_C=p$ such that for all $x\in \mc{F}$
\begin{enumerate}
\item$ \| px -xp\| \le \epsilon$,
\item $\dist(pxp, C)\le \epsilon$,
\item $1-p$ is Murray-von Neumann equivalent to a projection in $\overline{aAa}$.
\end{enumerate}
If $A$ satisfies the first two conditions, then we say $A$ is locally approximated by $C\sp*$-algebras unitarily in $\mc{C}$. When we consider the class $\TA \mc{C}$ which satisfies the strict comparison property (see the paragraph after Lemma \ref{L:sub} in this article or \cite{ET}), the following alternative condition is suggested to check the third condition from time to time,  
\begin{itemize}
\item [(3)'] $\tau(1-p)\le \epsilon$ for all $\tau\in T(A)$  the convex set of tracial states of $A$.
\end{itemize}

\end{defn}\begin{defn} \label{D:C}
Let $\mc{C}$ be a class of unital separable (weakly) semi-projective $C\sp*$-algebras whose quotients can be locally approximated by $C\sp*$-algebras unitarily in $\mc{C}$ such that 
\begin{enumerate}
\item if $A\in \mc{C}$, then $M_n(A)\in \mc{C}$,
\item if $A\in \mc{C}$, then any hereditary subalgebra of $A$ is in $\mc{C}$.
\end{enumerate}
\end{defn}
On the other hand, the second named author and Teruya \cite{OT1} consider the following class $\mc{S}$.  
\begin{defn}\label{D:S}
Let $\mc{S}$ be a class of unital separable $C\sp*$-algebras. Then $\mc{S}$ is \emph{finitely saturated} if the following closure conditions hold: 
\begin{enumerate}
\item If $A \in \mc{S}$ and $B\cong A$, then $B\in \mc{S}$.
\item If $A_1,A_2, \dots, A_n \in \mc{S}$, then $\bigoplus_{i=1}^nA_i \in \mc{S}$.
\item If $A \in \mc{S}$, then $M_n(A) \in \mc{S}$.
\item If $A\in \mc{S}$ and $p\in A$ a nonzero projection, then $pAp\in \mc{S}$.  
\end{enumerate} 
Moreover,  the finite saturation of $\mc{S}$ is the smallest finite saturated class which contains $\mc{S}$.  
\end{defn}

Elliott and Niu \cite{EN} considered a class for $\mc{S}$, which consists of all interval algebras and closed under  a finite direct sum.  In Elliott classification program, the most natural examples of $\mc{C}$ are the set of finite dimensional $C\sp*$-algebras and the set of so called 1-dimensional NCCW, which are of the form $F_1 \oplus (F_2\otimes C[0,1])$ where $F_i$'s are finite dimensional $C\sp*$-algebras. 
In the following definition, the class $\mc{C}$ is not necessarily same as one  in Definition \ref{D:C} or Definition \ref{D:S}. 

Though it is well known to experts,  we would like to observe when the conditions (1), (2), (3)  and the conditions (1), (2), (3)'  in Definition \ref{D:TAS} are equivalent since the literature is often scattered (see for instance \cite{ELPW, Lin:TR}). It is necessary that a $C\sp*$-algebra in our consideration has enough projections, thus we restrict ourselves on the following class of $C\sp*$-algebras. 
\begin{defn}\label{D:SP}
A $C\sp*$-algebra $A$ has  Property (SP) or (SP)-property if any nonzero hereditary $C\sp*$-subalgebra of $A$ has a nonzero projection. 
\end{defn}
  
\begin{prop}
Suppose that a simple unital $C\sp*$-algebra $A$ satisfies (SP)-property and the strict comparison property. Then the conditions (1), (2), (3) and the conditions (1), (2), (3)' for $A$ to be $\TA \mc{C}$ are equivalent. 
\end{prop}
\begin{proof}
Assume that $A$ satisfies (1), (2), (3) in Definition \ref{D:TAS}. Then given a finite set $\mc{F}$, $\epsilon>0$, we take $n\in \mathbb{N}$ such that $1/n < \epsilon$. Then we choose $n$-mutually orthogonal projections $r_1, \dots, r_n$ in $A$ such that $r_i$ is Murray-von Neumann equivalent to $r_j$ for any $i$ and $j$. Note that $\tau(r_1)< \epsilon$ for any tracial state $\tau \in T(A)$. 
Then take $a=r_1$, then we have a projection $p$ and a $C\sp*$-algebra $C$ from the class $\mc{C}$ satisfying (1), (2). By (3), we have $1-p$  is Murray-von Neumann equivalent to a projection smaller than $r_i$. Thus $\tau(1-p)\le \tau(r_1) < \epsilon$ so (3)' is obtained. \\
Conversely, since $A$ is unital, $T(A)$ is compact. Then given a finite set $\mc{F}$, $\epsilon>0$, $a\in A_{+}\setminus \{0\}$ we take a nonzero projection $r$ in $\overline{aAa}$ and consider $\delta=\min\{\epsilon, \inf_{\tau \in T(A)}\{\tau(r)\}\}$. Then for $\mc{F}$ and $\delta$  we have a projection $p$ and a $C\sp*$-algebra $C$ from the class $\mc{C}$ satisfying (1) and (2). By (3)' we have 
\[ \tau(1-p) < \delta < \tau(r)\] for any $\tau \in T(A)$. 
Then the strict comparison property implies that $1-p $ is Cuntz subequivalent to $r$, and $r$ is in $\overline{aAa}$, so we are done.  
\end{proof}

Now we introduce the key concept of this paper.  We define $A_{\infty}^{++}:=A_{\infty}^{+} \setminus \{[(x_n)] \in A_{\infty} \mid  \exists \{k_n\}_{n \in \mathbb{N}} \subset \mathbb{N} \,\, \text{such that } \lim_{n\to \infty} x_{k_n}=0\}$. Then, any nonzero projection $p=[(p_n)] \in A^{++}_{\infty}$ means that each $p_n$ is a nonzero projection in $A$. Similarly, any nonzero positive element $a=[(a_n)] \in A^{++}_{\infty}$ means that each $a_n$ is a nonzero positive in $A$ and the norm of $\|a_n\|$ is uniformly away from zero. Suppose that this statement is false. Then for each $1/n$  we can find an element $a_{k_n}$ such that $\|a_{k_n} \| < \frac{1}{n}$ and thus  $\lim_{n\to \infty}a_{k_n}=0$.  Then $a \notin A^{++}_{\infty}$.  
\begin{defn}\cite[Definition 2.1]{LeeOsaka} \label{D:Traciallyseqsplit}
Let $A$ and $B$ be separable $C\sp*$-algebras and $A$ be unital. A $*$-homomorphism $\phi:A \to B$ is called tracially sequentially-split, if for any nonzero element $z \in A_{\infty}^{++}$  there exist a $*$-homomorphism $\psi: B \to A_{\infty}$ and a nonzero projection $g\in A_{\infty}\cap A'$ such that 
\begin{enumerate}
\item $\psi(\phi(a))=ag$ for each $a\in A$, 
\item $1_{A_{\infty}} -g$ is Murray-von Neumann equivalent to a projection in a hereditary $C\sp*$-subalgebra $\overline{zA_{\infty}z}$ in $A_{\infty}$.
\end{enumerate}
\end{defn}

Note that if $B$ is unital and $\phi$ is unit preserving, then $g=\psi(1_B)$.  In the case of  $g=1_{A_{\infty}}$, $\phi$ is called a (strictly) sequentially split $*$-homomorphism where the second condition is automatic \cite[Page 11]{BS}. The $\psi$ in the above definition is called a tracial approximate left-inverse of $\phi$. Although the diagram below is not commutative, we still use it to symbolize the definition of a tracially sequentially-split map $\phi$ and its tracial approximate left inverse $\psi$;

\begin{equation}\label{D:diagram}
\xymatrix{ A \ar[rd]_{\phi} \ar@{-->}[rr]^{\iota} && A_{\infty} &\\
                          & B \ar[ur]_{\psi} }
\end{equation}
The following observation will be used throughout the paper. 

\begin{lem}\label{L:full}
\begin{enumerate}
\item Let $x$ be a full  element in $A_{\infty}$ and  write $x=[(x_n)_n]$. Then $x\in A_{\infty}^{++}$.
\item If $A$ is simple and $x \in A_{\infty}\cap A'$ is a nonzero projection, $ax$ is full for any $a\in A\setminus \{0\}$. In particular, when $A$ is unital, any nonzero projection $x$ in $A_{\infty}\cap A'$ is full and any nonzero $a\in A^{+}$ is full in $A_{\infty}$. 
\end{enumerate}
\end{lem}
\begin{proof}
(1) Suppose that there exists a strictly increasing subsequence $(i_n)_{n}$ of natural numbers such that $\lim_{n \to \infty}x_{i(n)}=0$. Then we can construct a map $\zeta:l^{\infty}(\mathbb{N}, A) \to l^{\infty}(\mathbb{N}, A)$ defined by $\zeta((y_n)_n)=((y_{i(n)})_n)$. Since this map induces a map from $A_{\infty}$ to $A_{\infty}$, we still denote the map by $\zeta$. It is obvious that $\zeta$ is nontrivial and $\zeta(x)=0$. Since $x$ is full, the closed ideal generated by $x$ is $A_{\infty}$. It follows that $\zeta$ becomes trivial, which is a contradiction.\\
(2)  Suppose that $ax$ is not full. Then the ideal generated by $ax$, say $I$,  is proper. Consider a quotient map $\pi: A_{\infty} \to A_{\infty}/I$ and combine it with the map $\rho: A \to A_{\infty}$ which sends $b\in A$ to $bx \in A_{\infty}$. Since $A$ is simple, the $*$-homomorphism $\pi \circ \rho: A \to A_{\infty}/I$ is injective.  Hence $(\pi \circ \rho) (a)=0$ implies $a=0$ which is a contradiction.  
\end{proof}

\begin{defn}
We say that $A_{\infty}$ has a weak (SP)-property if for any nonzero  element $x\in A^{++}_{\infty}$ the hereditary $C\sp*$-subalgebra generated by $x$ contains a nonzero projection. 
\end{defn}

\begin{prop}\label{P:dichotomy}
Let $A$ and $B$  be separable unital $C\sp*$-algebras and $\phi:A\to B$ be a tracially sequentially-split $*$-homomorphism. Then $A_{\infty}$ has a weak (SP)-property or $\phi$ is a (strictly) sequentially split $*$-homomorphism.
\end{prop}
\begin{proof}
Suppose that $A_{\infty}$ does not have a weak (SP)-property. Then there exists a nonzero $x\in A^{++}_{\infty}$ such that the hereditary subalgebra $\overline{xA_{\infty}x}$ contains no nonzero projections.   Since $\phi$ is tracially sequentially-split, we have $\psi: B \to A_{\infty}$ and a projection $g$ in $A_{\infty}\cap A'$ such that $\psi(\phi(a))=ag$ and $1-g $ is Murray von-Neumann equivalent to a projection in  $\overline{xA_{\infty}x} $. Since $\overline{xA_{\infty}x}$ has no nonzero projections, it follows that $g=1_{A_{\infty}}$. 
\end{proof}

We recall that a unital $C\sp*$-algebra has stable rank one if the invertible elements are dense in $A$ and denoted by $\tsr(A)=1$ shortly. 
 \begin{lem}\label{L:SP}
Let $A$ be a separable unital simple $C\sp*$-algebra.  Suppose that $A_{\infty}$ has a weak (SP)-propery. Then $A$ has (SP)-property. The converse is also true. 
\end{lem}
\begin{proof}
Let $x\in A^{+}\setminus \{0\}$. Then $x$ is full in $A_{\infty}$ by Lemma \ref{L:full}. Thus $x \in A^{++}_{\infty}$.  Consider $\overline{xA_{\infty}x}$. By the  assumption it contains a nonzero projection which implies that $\overline{xAx}$ contains a nonzero projection. Since $A$ is separable, every hereditary $C\sp*$-subalgebra of $A$ is of the form $\overline{xAx}$ and we are done.\\
 Conversely, assume that $A$ has (SP)-property. For a nonzero $x \in A^{++}_{\infty}$, we can represents it $x=[(x_n)_n]$ where $x_n\neq 0$ for all $n$. Then  for each $n$ we can find a nonzero projection $p_n \lesssim x_n$ so that there exists $y_n (\neq 0)$ such that $\| y_n x_n y_n^*-p_n \| < \frac{1}{n}$. Now let $y=[(y_n)_n]$ and $p=[(p_n)_n]$. Then $yxy^*=p$.  Thus $p \lesssim x$. 
\end{proof} 
\begin{thm}\label{T:TSR1}
Let both $A$ and $B$ be separable simple unital $C\sp*$-algebras and assume that there exists a unital tracially sequentially-split $*$-homomorphism $\phi:A\to B$. If $B$ is finite or more strongly $\tsr(B)=1$, then  $A$ is finite or $\tsr(A)=1$ respectively. 
\end{thm}
\begin{proof}
We assume that $A_{\infty}$ satisfies a weak (SP)-property, otherwise the conclusion follows from \cite{BS}.  Assume that $B$ is finite. Then we first show that $A$ is finite. Suppose that $vv^*=1$ for $v\in A$. Then $\phi(v)\phi(v)^*=1$ in $B$. Thus $\phi(v^*v)=1$. Then for $1_{A_{\infty}}$ we have a tracial approximate left inverse $\psi$ and obtain $(v^*v-1)g=0$ for a nonzero projection $g \in A_{\infty}\cap A'$. Since $A$ is simple, it follows that $v^*v=1$.   Hence $A$ is finite. Next we assume that $\tsr(B)=1$. Then to show that $\tsr(A)=1$, in view of \cite[Proposition 3.2]{Ro:UHF1}, it is enough to show that we can approximate a two sided zero divisor $x$ by an invertible element in $A$.  

 Let $y$ be an element in $A$ such that $yx=xy=0$ and $\epsilon >0$ given. We consider a hereditary $C\sp*$-algebra generated by $y$ which contains a nonzero projection $e$. Then we can find orthogonal subprojections $e_1$ and $e_2$ such that $e_1+e_2=e$ and $e_1 \sim e_2$ since $A$ is simple and has (SP)-property.  Then $yx=0$ implies that $xe=0$, thus $xe_1=0$. Similarly, $e_1x=0$. So $x=(1-e_1)x(1-e_1)$ and $\phi(x) \in \overline{(1-\phi(e_1))B (1-\phi(e_1))}$.  Since  the stable rank of the hereditary $C\sp*$-subalgebra of $B$ is preserved,  we can find an invertible element $b$ in $\overline{(1-\phi(e_1))B (1-\phi(e_1))}$ such that \[\| \phi(x)-b \| < \epsilon /3. \]
Then we take a tracial approximate inverse $\psi:B \to A_{\infty}$ such that $1-\psi(1)$ is Murray-von Neumann equivalent to a projection in $\overline{e_2A_{\infty}e_2}$, i.e., $1-\psi(1) \lesssim e_2$.  Then 
\[ \| \psi(b)-\psi(\phi(x))   \| =\| \psi(b) -xg  \|< \epsilon /3 \] where $g=\psi(1) \in A_{\infty}\cap A'$. 
In addition, there is a partial isometry $w$ in $A_{\infty}$ such that $w^*w=1-g$ and $ww^* \le e_1$. 
Let $v=w(1-e_1)$. Note that $v^*v=(1-g)(1-e_1)$ and $vv^* \le e_1$. \\
Consider $c=(1-g)x(1-g)+\epsilon/6(v+v^*+e_1-vv^*)$, in a matrix form  with respect to $(1-g)(1-e_1)$, $g(1-e_1)$, $e_1$; 
\[ \left(
\begin{matrix}
(1-g)x(1-g) & 0 & \frac{\epsilon}{6} v^* & 0 \\
0 &0 &0 & 0 \\
\frac{\epsilon}{6}v&0&0&0\\
0&0&0& \frac{\epsilon}{6}(e_1-vv^*)
\end{matrix}
\right).
\] 
It is not difficult to verify that $c$ is invertible element in $(1-g)(1-e_1)A_{\infty}(1-e_1)(1-g)+e_1A_{\infty}e_1$. Then $\psi(b)+c$ is invertible element in $A_{\infty}$ and 
\begin{align*}
\|x-(\psi(b)+c)  \| &= \|xg +(1-g)x(1-g) - (\psi(b)+c) \| \\
& \le \| xg -\psi(b)\|+ \epsilon /6 \|v+v^*+e_1-vv^*\|\\
& \le \epsilon/3+\epsilon/2 < \epsilon. 
\end{align*}
\end{proof}

If $B$ is non-simple, then Theorem \ref{T:TSR1} is not true in general since we find the following example. 
\begin{ex}\label{E:tsr}
Let $\alpha$ be an action of $\CAR$ with $\alpha^2 = \id$ constructed by 
Blackadar.(Symmetry of the $\CAR$ algebra, Annals of Math. 131 (1990), 
589--623). Note that $\alpha$ has the tracial Rokhlin property by \cite[Example 
3.1]{Phillips:tracial}. Then Proposition \ref{P:actiontoinclusion} and  Theorem \ref{T:tracialinclusion} imply that $\iota: {\CAR^\alpha} \rightarrow \CAR$ is a tracially sequentially-split $*$-homomorphism. It follows that $\id \otimes \iota:C[0,1] \otimes 
\CAR^{\alpha} \rightarrow C[0,1] \otimes \CAR$ is also a tracially sequentially-split 
$*$-homomorphism. It is obvious that $\tsr(C[0,1] \otimes \CAR) =1$. On the other hand, since 
$K_1(\CAR^{\alpha})$ is not trivial, $\tsr(C[0,1] \otimes \CAR^{\alpha}) \neq 1$ by 
\cite[Proposition 5.2]{NOP:ranks}.
\end{ex}

 We also recall that a unital $C\sp*$-algebra has real rank zero if the invertible self-adjoint elements are dense in the space of self-adjoint elements of $A$. 

\begin{thm}\label{T:RR0}
Let $A$ and $B$ separable simple unital $C\sp*$-algebras and assume that there exists  a unital tracially sequentially-split $*$-homomorphism $\phi:A\to B$. If $B$ has real rank zero, then so does $A$. 
\end{thm}
\begin{proof}
In view of Proposition \ref{P:dichotomy} and Lemma \ref{L:SP} we may assume that $A$ has  (SP)-property, otherwise the conclusion follows from \cite[Theorem 2.9]{BS}. Let $x$ be a self-adjoint element in $A$, then we consider a continuous function $f$ as defined below; 
\[ f(t)= \begin{cases}
1 \quad\text{if $|t| \le \frac{\epsilon}{12}$}, \\
0 \quad \text{if $|t| \ge \frac{\epsilon}{6}$},\\ 
\text{linear when $\frac{\epsilon}{12} \le |t| \le \frac{\epsilon}{6}$}.
\end{cases}\]
We may assume that $f(x) \ne 0$. Then  $\overline{f(x)Af(x)}$ the hereditary $C\sp*$-algebra  contains a nonzero projection $e$ and there are mutually orthogonal projections $e_1$ and $e_2$ such that $e=e_1+e_2$ and $e_2 \lesssim e_1$. Note that  $\|yx \| \le \frac{\epsilon}{6}$ for any $y \in \overline{f(x)Af(x)}$. It follows that $\displaystyle \| x-(1-e_1)x(1-e_1)\| \le \frac{\epsilon}{6}$. 
Again we have a tracial approximate inverse $\psi:B \to A_{\infty}$ such that $\psi(\phi(a))=ag$ and $1-g$ is Murray-von Neumann equivalent to a projection in the hereditary subalgebra generated by $e_2$ in $A_{\infty}$ where $g=\psi(1) \in A_{\infty}\cap A'$. 
Then we can find a self-adjoint  invertible element $b$ in $\overline{(1-\phi(e_1))B(1-\phi(e_1))}$ such that  
\[  \| (1-\phi(e_1))\phi(x)(1-\phi(e_1))-b  \| <  \frac{\epsilon}{6}, \] and take $c$ as in the proof of Theorem \ref{T:TSR1}.  Note that $c$ is a self-adjoint element. Then $\psi(b)+c$ is a self-adjoint invertible element in $A_{\infty}$ and 
  \begin{align*}
\|x-(\psi(b)+c)  \| &= \|x-(1-e_1)x(1-e_1)+(1-e_1)x(1-e_1) - \psi(b)-c \| \\
& \le \frac{\epsilon}{6} +\| (1-e_1)x(1-e_1)g -\psi(b)\|+ \frac{\epsilon}{6} \|v+v^*+e_1-vv^*\|\\
& \le \epsilon/3+\epsilon/2 < \epsilon. 
\end{align*}
\end{proof}

\begin{thm}\label{T:TAC}
Let $A$ and  $B$ be separable simple unital $C\sp*$-algebras and assume that  there exists a tracially sequentially-split $*$- homomorphism  $\phi:A \to B$. Let $\mc{C}$ be a class as in Definition \ref{D:C} which satisfies the strict comparisin property. If $B$ belongs to $\TA \mc{C}$, then so does $A$.
\end{thm}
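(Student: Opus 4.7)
First, I would reduce to the case where $A$ has property (SP). If $A$ lacks (SP), then by the proposition immediately preceding this theorem, $\phi$ is already (strictly) sequentially-split, so $A\in \TA\mc{C}$ follows from the analogous result for strictly sequentially-split maps in \cite{BS}. So assume $A$ has (SP); by Theorem~\ref{T:simple}, $A$ is simple. Fix testing data: a finite set $\mc{F}\subset A$, $\epsilon>0$, and $a\in A^{+}\setminus\{0\}$. Using simplicity together with (SP), I choose orthogonal nonzero projections $e_{1},e_{2}\in \overline{aAa}$, so that $e_{1}+e_{2}\in \overline{aAa}$.

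Next, I invoke the hypothesis $B\in \TA\mc{C}$ on the input $(\phi(\mc{F}), \delta, \phi(e_{1}))$ for a small parameter $\delta>0$ to be chosen at the end. The element $\phi(e_{1})$ is nonzero in $B$ because $\psi\circ\phi(e_{1})=e_{1}g$ is nonzero by simplicity of $A$ and the non-vanishing of $g$. This yields a projection $q\in B$ and a subalgebra $D\subset B$ with $D\in \mc{C}$ and $1_{D}=q$, satisfying $\|q\phi(x)-\phi(x)q\|<\delta$ and $\dist(q\phi(x)q, D)<\delta$ for all $x\in \mc{F}$, together with $1-q$ Murray--von Neumann equivalent to a projection $r\in \overline{\phi(e_{1})B\phi(e_{1})}$. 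In parallel, applying the tracially sequentially-split hypothesis to the positive element $e_{2}\in A_{\infty}$ produces $\psi:B\to A_{\infty}$ and a projection $g\in A_{\infty}\cap A'$ with $\psi(\phi(\cdot))=(\cdot)g$ and $1-g$ Murray--von Neumann equivalent to a projection in $\overline{e_{2}A_{\infty}e_{2}}$.

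The central computation takes place inside $A_{\infty}$. For each $x\in \mc{F}$, the centrality of $g$ together with $\psi(q)\leq g$ gives $x\psi(q)=xg\psi(q)=\psi(\phi(x)q)$ and $\psi(q)x=\psi(q)gx=\psi(q\phi(x))$, so $\|x\psi(q)-\psi(q)x\|<\delta$, and also $\psi(q)x\psi(q)=\psi(q\phi(x)q)$ lies within $\delta$ of $\psi(D)\subset A_{\infty}$. For the comparison, decompose $1-\psi(q)=(1-g)+\psi(1-q)$ as an orthogonal sum: the first summand is dominated by a projection $\leq e_{2}$ in $A_{\infty}$, while $\psi(1-q)\sim \psi(r)$ lies in $\overline{(e_{1}g)A_{\infty}(e_{1}g)}\subset \overline{e_{1}A_{\infty}e_{1}}$. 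Since $e_{1}\perp e_{2}$, assembling the two orthogonal partial isometries shows that $1-\psi(q)$ is Murray--von Neumann equivalent in $A_{\infty}$ to a subprojection of $e_{1}+e_{2}$, and hence to a projection lying in $\overline{aA_{\infty}a}$.

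Finally, I transport everything back from $A_{\infty}$ to $A$ via the (weak) semi-projectivity of $D\in \mc{C}$: the $*$-homomorphism $\psi|_{D}:D\to A_{\infty}$ is represented by a sequence of $*$-homomorphisms $\psi_{n}:D\to A$ (approximate on any prescribed finite subset of $D$). Setting $p_{n}:=\psi_{n}(q)$ and $C_{n}:=\psi_{n}(D)\subset A$, for sufficiently large $n$ the three approximate statements of the previous paragraph descend to $A$ with an error absorbed by a small enough choice of $\delta$; the Murray--von Neumann condition (3) descends by representing the dominating projection by a sequence of projections in $\overline{aAa}$ and using stability of the MvN relation under small perturbations. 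Since $C_{n}$ is a quotient of $D\in \mc{C}$, and quotients of $\mc{C}$-algebras are locally approximated by $\mc{C}$ (Definition~\ref{D:C}), a routine argument produces a $\mc{C}$-subalgebra $C\subset C_{n}$ whose unit $p$ approximates $p_{n}$ and that approximates $\{p_{n}xp_{n}:x\in \mc{F}\}$ within $\delta$. After absorbing accumulated errors, $(p, C)$ verifies the $\TA\mc{C}$ conditions for $(\mc{F}, \epsilon, a)$. The main obstacle is precisely this last lifting step: orchestrating (weak) semi-projectivity of $D$ with the local approximability of its quotients by $\mc{C}$, while faithfully transferring the Murray--von Neumann comparison from $A_{\infty}$ to $A$.
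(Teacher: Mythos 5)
Your argument is correct in outline, but it is not the route the paper takes for this theorem. The paper's proof of Theorem~\ref{T:TAC} works entirely on the tracial side: it uses the alternative formulations of both hypotheses --- $\tau(1_{B_1})\ge 1-\epsilon$ for the $\TA\mc{C}$ decomposition of $B$ and the tracial smallness of $1-g$ for the approximate left inverse --- pushes traces on $A_{\infty}$ forward through $\psi$ to traces on $B$ to obtain $\tau(\psi(1_{B_1}))\ge 1-\epsilon$, and then descends this estimate to the components $\kappa_n(1_{B_1})$ of the semi-projective lift via a sublemma asserting that $\tau(\mathbf{a})\ge c$ for all $\tau\in T(A_{\infty})$ forces $\liminf_n\min_{\tau\in T(A)}\tau(a_n)\ge c$. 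The strict comparison hypothesis on $\mc{C}$ is precisely what lets the paper trade the Murray--von Neumann condition (3) for the trace condition (3)'. Your proof instead stays with the Murray--von Neumann formulation throughout: two orthogonal projections $e_1,e_2\in\overline{aAa}$, the defect $1-q$ of the $\TA\mc{C}$ decomposition of $B$ absorbed under $e_1$ via $\psi$, the defect $1-g$ of the approximate left inverse absorbed under $e_2$, and the two orthogonal partial isometries added. This is essentially the strategy the paper reserves for its companion result on $\TA\mc{S}$ (the theorem immediately following, which it advertises as the trace-free variant). What your route buys is independence from traces and from the strict comparison hypothesis; what it costs is simplicity, and that is where the one substantive gap lies.

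Concretely: you invoke Theorem~\ref{T:simple} to conclude that $A$ is simple, but that theorem requires $B$ to be simple, and Theorem~\ref{T:TAC} does not assume this. Simplicity does real work in two places in your argument: (i) splitting a nonzero projection of $\overline{aAa}$ (supplied by (SP)) into two orthogonal nonzero subprojections --- in a non-simple algebra such as $\mathbb{C}\oplus\mathcal{O}_2$ with $a=(1,0)$, the hereditary subalgebra $\overline{aAa}$ contains a minimal projection and no such splitting exists; and (ii) the claim $\phi(e_1)\neq 0$, which you derive from injectivity of $x\mapsto xg$, itself a consequence of simplicity of $A$. As written, your argument therefore proves the theorem only under the additional hypothesis that $B$ is simple (which does hold in all of the paper's applications, and is assumed in the paper's $\TA\mc{S}$ theorem); to recover the statement as given you would need the paper's tracial route or an independent justification of these two steps. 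One smaller point: in the final descent, after locally approximating the quotient $\psi_n(D)$ by some $C\in\mc{C}$, you should arrange $1_C$ equal to, or at least unitarily close to, $p_n=\psi_n(q)$, so that $1-1_C\sim 1-p_n$ and the Murray--von Neumann comparison with $\overline{aAa}$ is not disturbed; the paper handles this by taking the local approximation unitally.
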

\begin{proof}
It is enough to show that given a finite set $F\subset A$, $\epsilon>0$, there is a unital $C\sp*$-subalgebra $C$ of $A$ in $\mc{C}$ and a projection $p=1_C$ such that for all $x\in F$
\begin{enumerate}
\item $\| px -xp\|] \le \epsilon$,
\item $\dist(pxp, C)\le \epsilon$, 
\item $\tau(p)\ge 1-\epsilon$ for all $\tau \in T(A)$.
\end{enumerate} 

Consider a finite set $\phi(F) \subset B$. Then since $B$ belongs to $\TA\mc{C}$, we have a unital $C\sp*$-algebra $B_1 \subset B$ and $B_1\in \mc{C}$ such that for all $x\in F$ 
\begin{enumerate}
\item[(a)] $\|1_{B_{1}}\phi(x)-\phi(x)1_{B_1}\| \le \epsilon$, 
\item[(b)] $\dist(1_{B_1}\phi(x)1_{B_1}, B_1)\le \epsilon$,
\item[(c)] $\tau(1_{B_1})\ge 1-\epsilon$ for all $\tau \in T(B)$. 
\end{enumerate}

Let $\psi$ be the tracial approximate left inverse for $\phi$ and  consider its restriction to $B_1$ $\psi|_{B_1}:B_1\to A_{\infty}$ . Since $B_1$ is a semi-projective $C\sp*$-algebra, there exist  a lift $\kappa: B_1 \to l^{\infty}(\mathbb{N}, A)$ such that the following diagram commutes. 
\[\xymatrix{ & l^{\infty}(\mathbb{N}, A) \ar[d]\\
B_1 \ar[ur]^{\kappa} \ar[r]_{\psi} & A_{\infty}}
\]
We write $\kappa=(\kappa_1,\kappa_2,\dots )$ where each $\kappa_n$ is a $*$-homomorphism from $B_1$ to $A$. Note that 
\[\begin{split}
[\psi(1_{B_1}), \psi(\phi(x))]&=\psi(1_{B_1})x\psi(1_B)-x\psi(1_B)\psi(1_{B_1})\\
&=\psi(1_{B_1})\psi(1_B)x -x\psi(1_B)\psi(1_{B_1})\\
&=\psi(1_{B_1})x -x\psi(1_{B_1})\\
&=[\psi(1_{B_1}),x]
\end{split} 
\]
Thus we have for all $x\in F$
\begin{equation*}\label{E:TAC(1)}
\|[\psi(1_{B_1}), x] \| \le \epsilon
\end{equation*}
It follows that for all $x\in F$ 
\begin{equation}
\limsup_{n} \| [\kappa_n (1_{B_1}), x] \| \le \epsilon
\end{equation}
Moreover, $\dist(\psi(1_{B_1})\psi(\phi(x))\psi(1_{B_1}), \psi(B_1))=\dist(\psi(1_{B_1})x\psi(1_{B_1}), \psi(B_1)) \le \epsilon$. 
It follows that 
\begin{equation}
\limsup_{n}\dist(\kappa_n(1_{B_1})x\kappa_n(1_{B_1}), \kappa_n(B_1)) \le \epsilon
\end{equation}
Also, for any $\tau \in T(A)$, we have a  trace $\tau_{\omega}$ on $A_{\infty}$ defined by 
\[\tau_{\omega}([(a_n)_n])=\lim_{n\to \omega} \tau(a_n)\] for a free ultrafilter $\omega$ on $\mathbb{N}$. Then
$\tau'=\tau_{\omega} \circ \psi \in T(B)$ and  $\tau'\circ \phi=\tau$. 

By the condition (c) we have 
\begin{equation}\label{E:traceestimate}
\tau_{\omega}(\psi(1_{B_1}))\ge 1-\epsilon.
\end{equation}

Thus  we conclude that 
\begin{equation}\label{E:pretracialestimate}
\liminf_n \min_{\tau \in T(A)}\tau(\kappa_n(1_{B_1}))\ge 1-\epsilon.
\end{equation}
Therefore, we have the following estimates for a suitable $n$, 
\begin{enumerate}
\item[(1)'] $\| [\kappa_n(1_{B_1}), x] \| \le 2\epsilon $ for all $x\in F$,
\item[(2)'] $\dist(\kappa_n(1_{B_1})x\kappa_n(1_{B_1}), \kappa_n(B_1))\le 2\epsilon$ for al $x\in F$,
\item[(3)'] $\tau(\kappa_n(1_{B_1}))\ge 1-2\epsilon$.
\end{enumerate}
 Consider a finite set $G\subset \kappa_n(B_1)$ consisting of $y$'s such that
$\displaystyle \|\kappa_n(1_{B_1})x\kappa_n(1_{B_1})-y\| \le 2\epsilon$ for each $x\in F$ by (2)'.
Since $\mc{C}$ is a class such that any quotient of $C\sp*$-algebra in $\mc{C}$ is locally approximated by a $C\sp*$-algebra in $\mc{C}$ unitarily, we have a $C\sp*$-algebra $C \in \mc{C}$ such that $C\subset \kappa_n(B_1)$ with $1_C=\kappa_n(1_{B_1})$ such that for any $y\in G$
\begin{equation}\label{E:localapproximation}
\dist(y, C) \le \epsilon
\end{equation} 
 Consequently, we show that there exist a $C\sp*$-subalgebra $C \subset A$ such that for all $x\in F$
\begin{enumerate}
\item $\|[1_C, x] \| \le 2\epsilon$, 
\item $\dist(1_Cx1_C, C)\le 3\epsilon$, 
\item $\tau(1_C)\ge 1-2\epsilon$.
\end{enumerate}
So we are done. 
\end{proof}

Because Toms-Winter conjecture is yet confirmed (the implication from the strict comparison property to $\mc{Z}$-stability is not complete in full generality) \cite{SWW:Toms-Winter}, we provide the following observation based on \cite{OT1} which is applicable even in the absence of traces.

\begin{thm}\label{T:TAS}
Let $A$ and $B$ be simple separable unital $C\sp*$-algebras and $\mc{S}$ be a class of unital (weakly) semi-projective $C\sp*$-algebras satisfying the conditions as in Definition \ref{D:S} (in fact, we do not need the condition (2)). Let $\phi:A \to B$ be a tracially sequentially-split $*$-homomorphism. If $B\in \TA\mc{S}$, then so does $A$. 
\end{thm}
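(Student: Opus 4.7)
The plan is to adapt the argument of Theorem \ref{T:TAC} to the class $\TA\mc{S}$, whose defining feature is the Murray--von Neumann subequivalence condition (3) in place of the tracial estimate (3)$'$. The strategy is to transport a $\TA\mc{S}$ decomposition of $\phi(\mc{F})$ in $B$ back to $A$ via the tracially approximate left inverse $\psi$ and a lift $(\kappa_n)$ of $\psi|_{B_1}$ furnished by (weak) semi-projectivity of $B_1$.

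First I would reduce to the case where $A$ has property (SP); otherwise $\phi$ is strictly sequentially split and the conclusion follows from \cite[Theorem 2.9]{BS}. Since $B$ is simple, Theorem \ref{T:simple} gives that $A$ is simple, so in particular $\phi$ is injective. Given $\mc{F} \subset A$ finite, $\epsilon > 0$, and a nonzero $a \in A^+$, use (SP) together with simplicity to choose a nonzero projection $e_0 \in \overline{aAa}$ and split it as $e_0 = e_1 + e_2$ with $e_1, e_2$ nonzero orthogonal subprojections. Apply $B \in \TA\mc{S}$ to the data $(\phi(\mc{F}), \epsilon, \phi(e_1))$ to produce a projection $q \in B$ and a unital $B_1 \in \mc{S}$ with $1_{B_1}=q$ satisfying the three conditions, the last reading $1_B - q \lesssim$ some projection in $\overline{\phi(e_1)B\phi(e_1)}$. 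Then invoke the tracially sequentially split property of $\phi$ with $z = e_2 \in A_\infty^+$ to obtain $\psi: B \to A_\infty$ with $g = \psi(1_B) \in A_\infty \cap A'$, $\psi(\phi(a)) = ag$ for $a \in A$, and $1_{A_\infty} - g \lesssim e_2$. Using (weak) semi-projectivity, lift $\psi|_{B_1}$ to a sequence $\kappa=(\kappa_n): B_1 \to l^\infty(\mathbb{N},A)$. Exactly as in Theorem \ref{T:TAC}, for sufficiently large $n$ one obtains
\[
\|[\kappa_n(1_{B_1}),x]\| \le 2\epsilon \quad \text{and} \quad \dist\bigl(\kappa_n(1_{B_1})x\kappa_n(1_{B_1}), \kappa_n(B_1)\bigr) \le 2\epsilon
\]
for every $x \in \mc{F}$.

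The two new ingredients are the subequivalence $1_A - \kappa_n(1_{B_1}) \lesssim e_0$ in $A$ and the membership $C \in \mc{S}$ of the witnessing subalgebra. For the subequivalence, decompose orthogonally in $A_\infty$:
\[
1_{A_\infty} - \psi(q) = (1_{A_\infty} - g) + \psi(1_B - q).
\]
Applying $\psi$ to condition (c) yields $\psi(1_B - q) \lesssim \psi(\phi(e_1)) = e_1 g \le e_1$ in $A_\infty$, while the choice of $\psi$ gives $1_{A_\infty} - g \lesssim e_2$. Since $e_1 \perp e_2$, these combine to $1_{A_\infty} - \psi(q) \lesssim e_1 + e_2 = e_0$ in $A_\infty$. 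Representing the witnessing partial isometry componentwise, together with a standard polar-decomposition adjustment, transfers the estimate to $1_A - \kappa_n(1_{B_1}) \lesssim e_0$ in $A$ for all sufficiently large $n$.

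The main obstacle lies in the last ingredient: unlike in Theorem \ref{T:TAC}, the class $\mc{S}$ is not assumed closed under quotients, so the candidate $C := \kappa_n(B_1)$ belongs to $\mc{S}$ only when $\kappa_n$ is injective. Here the simplicity of $B$ becomes essential: $\psi$ is nonzero because $g = \psi(1_B) \ne 0$, and simplicity of $B$ forces $\psi$ to be injective, hence so is $\psi|_{B_1}$. Combined with the (weak) semi-projectivity of $B_1$, this allows one to arrange $\kappa_n$ to be injective for all sufficiently large $n$, so that $C := \kappa_n(B_1) \cong B_1 \in \mc{S}$ by Definition \ref{D:S}(1). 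Together with the approximation estimates and the subequivalence $1_A - 1_C \lesssim e_0 \in \overline{aAa}$, this witnesses $A \in \TA\mc{S}$.
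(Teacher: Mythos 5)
Your proposal is correct and follows essentially the same route as the paper's proof: split off two orthogonal projections in $\overline{aAa}$, feed one into the $\TA\mc{S}$ decomposition of $B$ and the other into the choice of tracially approximate left inverse, lift via (weak) semi-projectivity, combine the two Cuntz/Murray--von Neumann subequivalences, and use simplicity of $B$ to make $\psi$ (hence the lifts) injective so that the image of $B_1$ stays in $\mc{S}$. The only differences are cosmetic (your $e_1,e_2$ are the paper's $r_1,r_2$, and you make the transfer of the subequivalence to a finite stage slightly more explicit).
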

\begin{proof}
Again we assume that $A$ has (SP)-property or $A_{\infty}$ has a weak (SP)-property. Consider a triple $(F, \epsilon, a)$ where $F$ is a finite set, $\epsilon>0$, and $a$ is a nonzero positive element in $A$.  We must show that there exist a projection $p$ and $C\sp*$-subalgebra  $C (\subset A)$ in $\mc{S}$ with $1_C=p$ such that for all $x\in F$ 
\begin{enumerate}
\item $\|px-xp \| \le \epsilon$,  
\item $ \dist(pxp, C) \le \epsilon$,  
\item $1-p$ is Murray-von Neumann equivalent to a projection in $\overline{aAa}$. 
\end{enumerate} 
We can take two orthogonal nonzero projections $r_1$ and  $r_2$ in $\overline{aAa}$. Then $\phi(r_1)\perp \phi(r_2)$.  For a triple $(G=\phi(F), \epsilon, \phi(r_1))$, since $B\in \TA\mc{S}$, we have a projection $p'$ and a $C\sp*$-algebra $C'$ in $\mc{S}$ with $1_{C'}=p'$ such that for all $x\in F$ 
\begin{enumerate}
\item $\|p'\phi(x)-\phi(x)p'\| \le \epsilon$,
\item $\dist (p'\phi(x)p', C') \le \epsilon$,
\item $1-p'$ is Murray-von Neumann equivalent to a projection $s$ in $\overline{\phi(r_1)B\phi(r_1)}$.
\end{enumerate} 
Then for $r_2>0$, there are a projection $g\in A_{\infty}\cap A'$ and a $*$-homomorphism $\psi:B\to A_{\infty}$ such that 
\begin{itemize}
\item[(i)] $\psi(\phi(a))=ag$, 
\item[(ii)] $1-g \sim r $ where $r$ is a projection in $\overline{r_2A_{\infty}r_2}$ in $A_{\infty}$.
\end{itemize}
Since $C'$ is (weakly) semi-projective, we can lift a restriction of $\psi$ to $C'$ denoted by $\psi |_{C'}$ to $l^{\infty}(\mathbb{N}, A)$.  In other words, we have a sequence of $*$-homomorphism $\psi_k$ from $C'$ to $A$ such that $\psi(c)=(\psi_k(c))_k + C_0(\mathbb{N}, A)$. Note that each $\psi_k$  is injective  since $B$ is simple and $\psi|_{C'}$ is injective.  So $\psi_k(C') \in \mc{S}$.\\
Then from (i) for all $x\in F$
\[\| \psi(p')x-x\psi(p')\| \le \frac{\epsilon}{2}. \]
 Or, 
\begin{equation}
\limsup_k \|\psi_k(p')x-x\psi_k(p') \|  \le \frac{\epsilon}{2}. 
\end{equation}
From (ii) for all $x\in F$
\[ \dist(\psi(p')x\psi(p'), \psi(C')) \le \frac{\epsilon}{2} \]
Or, 
\begin{equation}
 \limsup_k [\dist(\psi_k(p')x\psi_k(p'), \psi_k(C'))] \le \frac{\epsilon}{2}. 
\end{equation}
Moreover,
\[ 
\begin{split}
1-\psi(p')&=1-g +g-\psi(p')\\
&= 1 - g + \psi(1 - p')\\
&\sim r+\psi(s) \in \overline{aA_{\infty}a}.
\end{split}
\]
Then for large enough $k$ we can conclude  for all $x$
\begin{enumerate}
\item  $\|\psi_k(1_{C'})x-\psi(1_{C'}) \| \le \epsilon$, 
\item $\dist(\psi_k(1_{C'})x\psi_k(1_{C'}), \psi_k(C')) \le \epsilon $
\end{enumerate}
together with the fact that 
$1-\psi_k(1_{C'}) $ is Murray-von Neumann equivalent to a projection $\overline{aAa}$ in $A$.  
\end{proof}

Now we turn to the strict comparison property and $\mc{Z}$-stability.  
\begin{lem}\label{L:sub}
Let $A$ be a unital $C\sp*$-algebra and $B$ a hereditary $C\sp*$-subalgebra. Suppose that $r,q$ are two projections in $A$ such that $r \le q $ and $q \sim p$ in $A$ where $p$ is a projection in $ B$. Then there is a projection $r' \in B$ such that $r\sim r'$ and $r' \le p$.  
\end{lem}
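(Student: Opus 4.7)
The plan is to construct the desired projection by conjugating $r$ through a partial isometry implementing $q\sim p$, and then to verify that the result lives inside $B$ by using the hereditary hypothesis.

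First I would pick a partial isometry $v\in A$ with $v^{*}v=q$ and $vv^{*}=p$, and set $r':=vrv^{*}$. Because $r\le q$ forces the standard identity $qr=rq=r$, a short calculation yields
\[(r')^{2}=vr(v^{*}v)rv^{*}=vrqrv^{*}=vrv^{*}=r',\]
and $r'$ is manifestly self-adjoint, hence a projection. The equivalence $r\sim r'$ is witnessed by $w:=vr$, since $w^{*}w=rv^{*}vr=rqr=r$ and $ww^{*}=vr^{2}v^{*}=vrv^{*}=r'$. The bound $r'\le p$ is immediate from $v(q-r)v^{*}\ge 0$ together with $vqv^{*}=(vv^{*})^{2}=p^{2}=p$.

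The only point that actually uses the hereditarity hypothesis is verifying $r'\in B$. Using $pv=vv^{*}v=v$, one checks $pr'p=pvrv^{*}p=vrv^{*}=r'$, so $r'$ lies in the corner $pAp$. Since $p\in B$ and $B$ is hereditary, $pAp\subset B$ (any positive $a\in A$ satisfies $0\le pap\le\|a\|\,p\in B$, and a real/imaginary decomposition handles general $a$), giving $r'\in B$ as required.

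The lemma is essentially bookkeeping about partial isometries; there is no real obstacle. The only slightly substantive point is the inclusion $pAp\subset B$ in the final step, which is the standard consequence of hereditarity.
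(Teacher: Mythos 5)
Your proof is correct and follows essentially the same route as the paper's: conjugate $r$ by the partial isometry implementing $q\sim p$ (you orient $v$ the opposite way, which is immaterial) and use hereditarity of $B$ to conclude $r'=vrv^{*}\in pAp\subset B$. The paper's version is terser but identical in substance, so no further comment is needed.
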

\begin{proof}
Let $v$ be a partial isometry in $A$ such that $vv^*=q$, $v^*v=p$. Then take $r'=v^*rv$ and it follows that $r'$ is a projection and $r \sim r'$. Note that 
$pr'=r'=r'p$,  so $r'\le p$. The fact $p \in B$ implies that $r' \in B$.  
\end{proof}

 For two positive elements $x,y$ in a unital $C\sp*$-algebra $A$ we say that $x$ is Cuntz subequivalent to $y$, written as $x\lesssim y$, if there exist sequences $(r_n)_n$ and $(s_n)_n$ such that $r_nys_n \to x$ in norm \cite{Cu}.  

For each $\epsilon >0 $ define $f_{\epsilon}:\mathbb{R}_{+} \to \mathbb{R}_{+} $ by 
\[
f_{\epsilon}(t)=
\begin{cases}
0, \quad &0\le t \le \epsilon,\\
\epsilon^{-1}(t-\epsilon) \quad &\epsilon \le t  \le 2\epsilon, \\
1 \quad & 2\epsilon \le t. 
\end{cases}
\]
The following characterization will be useful \cite[Proposition 2.4]{Ro:UHF2}. 
\begin{prop}[Rordam]\label{P:Subequivalence}
The following are equivalent; 
\begin{enumerate}
\item $x \lesssim y$. 
\item There exists a sequnce $r_n$ in $A$ such that $r_nyr^*_n \to x$.
\item   For every $\epsilon >0$ there exist $\delta>0 , r\in A$ such that $r f_{\delta} (y)r^* =f_{\epsilon}(x) $.
\end{enumerate}
\end{prop}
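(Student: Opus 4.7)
The equivalence $(1)\iff(2)$ is immediate from the paper's definition of $\lesssim$, so the remaining content is $(2)\iff(3)$.

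For $(3)\Rightarrow(2)$, my plan rests on the elementary pointwise bound $|t-tf_\epsilon(t)|\leq\epsilon$ for every $t\geq 0$, verified case-by-case on the three pieces of the definition of $f_\epsilon$; this yields $\|x-xf_\epsilon(x)\|\leq\epsilon$. Starting from $r_\epsilon f_{\delta_\epsilon}(y)r_\epsilon^*=f_\epsilon(x)$, I multiply both sides by $x^{1/2}$. Because $f_{\delta_\epsilon}$ vanishes on $[0,\delta_\epsilon]$, the function $h(t):=f_{\delta_\epsilon}(t)/t$ extends continuously by $h(0):=0$, and the commutativity of the functional calculus gives $f_{\delta_\epsilon}(y)=h(y)^{1/2}yh(y)^{1/2}$. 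Setting $s_\epsilon:=x^{1/2}r_\epsilon h(y)^{1/2}$ then yields $s_\epsilon y s_\epsilon^*=xf_\epsilon(x)$, which converges in norm to $x$ as $\epsilon\to 0$, establishing $(2)$.

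For $(2)\Rightarrow(3)$, assume $r_n y r_n^*\to x$, fix $\epsilon>0$, and choose $n$ large enough that $\|z-x\|<\eta$ where $z:=r_n y r_n^*$ and $\eta\ll\epsilon$ is to be specified. The plan is to chain two functional-calculus manipulations. The first is a perturbation lemma in the style of Rordam: provided $\eta$ is sufficiently small compared with $\epsilon$, there exist $w\in A$ and $\mu>0$ realizing the exact equality $f_\epsilon(x)=wf_\mu(z)w^*$. The second invokes the polar-decomposition identity $g(uu^*)=u(g(u^*u)/(u^*u))u^*$ (valid for continuous $g$ with $g(0)=0$) applied to $u:=r_n y^{1/2}$ and $g:=f_\mu$, expressing $f_\mu(z)=r_n y^{1/2}\phi(y^{1/2}r_n^*r_n y^{1/2})y^{1/2}r_n^*$ for $\phi(t):=f_\mu(t)/t$. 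A further functional-calculus step, choosing $\delta>0$ small compared with the spectral support of the middle factor, produces $r\in A$ with $rf_\delta(y)r^*=f_\mu(z)$; composing yields $f_\epsilon(x)=(wr)f_\delta(y)(wr)^*$.

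The main obstacle is securing the exact equality $rf_\delta(y)r^*=f_\epsilon(x)$ rather than a mere norm approximation; this is precisely the content of the Rordam-style perturbation lemma invoked in the first step of $(2)\Rightarrow(3)$. The classical statement of that lemma is phrased in terms of the cutoff $(t-\epsilon)_+$, but it transcribes to the bounded cutoff $f_\epsilon$ of this paper by using the factorization $f_\epsilon=g_\epsilon^2$ for a continuous $g_\epsilon$ vanishing at $0$, which lets the standard construction of the intertwining element from continuous functional calculus applied to $(b+\eta)^{-1/2}$-type expressions go through essentially unchanged.
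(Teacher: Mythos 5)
The paper itself offers no proof of this proposition: it is quoted as a known result of R\o rdam (cf.\ the reference to \emph{On the structure of simple C*-algebras tensored with a UHF-algebra II}), so your argument can only be assessed on its own terms. Within it, $(1)\Leftrightarrow(2)$ is indeed the paper's definition of $\lesssim$, and your proof of $(3)\Rightarrow(2)$ is complete and correct: the pointwise bound $|t-tf_\epsilon(t)|\le\epsilon$ holds, $h(t)=f_{\delta_\epsilon}(t)/t$ extends continuously by $0$ because $f_{\delta_\epsilon}$ vanishes on $[0,\delta_\epsilon]$, and the identity $s_\epsilon y s_\epsilon^*=xf_\epsilon(x)$ with $s_\epsilon=x^{1/2}r_\epsilon h(y)^{1/2}$ follows by commutativity of the functional calculus.

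The gap is in the last step of $(2)\Rightarrow(3)$. Having written $f_\mu(z)=u\,\phi(u^*u)\,u^*$ with $u=r_ny^{1/2}$ and $\phi(t)=f_\mu(t)/t$, you assert that choosing $\delta$ ``small compared with the spectral support of the middle factor'' yields the exact identity $rf_\delta(y)r^*=f_\mu(z)$. The natural candidate $r=u\,\phi(u^*u)^{1/2}$ satisfies this only if $f_\delta(y)$ acts as a unit on $\phi(c)^{1/2}$, where $c=y^{1/2}r_n^*r_ny^{1/2}$; that requires $\phi(c)$ to lie in the hereditary subalgebra generated by $(y-\eta)_+$ for some fixed $\eta>0$. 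But the support of $\phi(c)$ is governed by the spectrum of $c$, not of $y$, and the required containment can fail for every $\eta>0$: take $y=\operatorname{diag}(1,\tfrac12,\tfrac13,\dots)$ on $\ell^2$ and $r_n=|e_1\rangle\langle v|$ with $v$ having all coordinates nonzero; then $c$ is a rank-one operator whose range vector $w=y^{1/2}v$ has infinitely many nonzero coordinates, while $\overline{(y-\eta)_+A(y-\eta)_+}$ is a finite-dimensional corner, so $rf_\delta(y)r^*\ne f_\mu(z)$ for every $\delta>0$. The standard repair reorders your steps so that the perturbation lemma is applied only once and the problematic support claim is never needed: since $\|(r_ny^{1/2})f_\delta(y)(r_ny^{1/2})^*-r_nyr_n^*\|\le 2\delta\|r_n\|^2$, first choose $n$ and then $\delta$ so that $\|(r_ny^{1/2})f_\delta(y)(r_ny^{1/2})^*-x\|<\epsilon/2$; apply the R\o rdam perturbation lemma to get $(x-\epsilon/2)_+=d\,(r_ny^{1/2})f_\delta(y)(r_ny^{1/2})^*\,d^*$; and finally convert via the exact identity $f_\epsilon(x)=k(x)(x-\epsilon/2)_+k(x)$, where $k(t)=\bigl(f_\epsilon(t)/(t-\epsilon/2)_+\bigr)^{1/2}$ is continuous because $f_\epsilon$ vanishes on $[0,\epsilon]$. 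This yields $f_\epsilon(x)=\bigl(k(x)dr_ny^{1/2}\bigr)f_\delta(y)\bigl(k(x)dr_ny^{1/2}\bigr)^*$, which is condition $(3)$.
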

\begin{lem}\cite[Proposition 1.1]{Cu} \label{L:useful}
Let $x,x',y, y' \in A^{+}$ be such that $x \lesssim y, x' \lesssim y'$ and $y \perp y'$. Then $x+x' \lesssim y+y'$.  
\end{lem}

Next we recall the definition of the strict comparison property; Let $A$ be a unital nuclear $C\sp*$-algebra, and denote by  $T(A)$ the space of normalized traces on $A$. Given $\tau \in T(A)$, we define a lower semicontinuous map $d_{\tau}:M_{\infty}(A)_{+} \to \mathbb{R}^{+}$ by 
\[d_{\tau}(a)=\lim_{n \to \infty} \tau(a^{1/n}).\]  where $M_{\infty}(A)_{+}$ denotes the positive elements in $M_{\infty}(A)$ that is the algebraic limit of the directed system $(M_n(A), \phi_n)$.
 If $A$ has the property that $a\lesssim b$ whenever $d_{\tau}(a) < d_{\tau}(b)$ for every $\tau \in T(A)$, then we say $A$ has \emph{the strict comparison property of positive elements} or shortly \emph{strict comparison}.
\begin{defn}[Hirshberg and Orovitz]
We say that a unital $C\sp*$-algebra $A$ is tracially $\mc{Z}$-absorbing if $A \ncong \mathbb{C}$ and for any finite set $F\subset A$, $\epsilon >0$, and nonzero positive element $a\in A$ and $n\in \mathbb{N}$ there is an order zero contraction $\phi:M_n(\mathbb{C}) \to A$ such that the following hold:
\begin{enumerate}
\item $1-\phi(1)\lesssim a$, 
\item for any normalized element $x\in M_n(\mathbb{C})$ and any $y\in F$ we have $\|[\phi(x), y]\| < \epsilon $.
\end{enumerate}
\end{defn}

Recall that the Jiang-Su algebra $\mc{Z}$ is a simple separable nuclear and infinite dimensional $C\sp*$-algebra with a unique trace and the same Elliott invariant with $\mathbb{C}$ \cite{JS}. We say that $A$ is $\mc{Z}$-stable or $\mc{Z}$-absorbing if $A\otimes \mc{Z}\cong A$. 
\begin{thm}\label{T:Z-absorbing}
Let $A$ and $B$ be two simple unital $C\sp*$-algebras and $\phi:A\to B$  a unital tracially sequentially-split  $*$-homomorphism. If $B$ is tracially $\mc{Z}$-absorbing, then so is $A$. Thus, if $B$ is $\mc{Z}$-absorbing, then $A$ is also $\mc{Z}$-absorbing provided that $A$ is nuclear.    
\end{thm}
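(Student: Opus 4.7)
The plan is to verify the definition of tracially $\mc{Z}$-absorbing for $A$ by transporting an order zero contraction from $B$ to $A_{\infty}$ via the tracial approximate left inverse $\psi$, and then lifting it back to $A$. As in the previous theorems, we may assume $A$ has property (SP); otherwise $\phi$ is strictly sequentially split and the statement reduces to the non-tracial case \cite{BS}. Fix a finite set $F \subset A$, $\epsilon>0$, a nonzero positive $a \in A$, and $n \in \mathbb{N}$. Using property (SP) together with simplicity of $A$ (guaranteed by Theorem \ref{T:simple}), choose two nonzero mutually orthogonal projections $r_1, r_2 \in \overline{aAa}$.

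Since $A$ is simple and $\phi$ is unital, $\phi$ is injective, so $\phi(r_1)$ is a nonzero positive element of $B$. Apply the tracial $\mc{Z}$-absorption of $B$ to the datum $(\phi(F), \epsilon, \phi(r_1), n)$ to obtain an order zero contraction $\sigma: M_n \to B$ such that $1_B - \sigma(1) \lesssim \phi(r_1)$ in $B$ and $\|[\sigma(x), \phi(y)]\| < \epsilon$ for every normalized $x \in M_n$ and every $y \in F$. Next, invoke the tracial sequential splitness of $\phi$ with the element $r_2$: we obtain $\psi: B \to A_{\infty}$ and the projection $g := \psi(1_B) \in A_{\infty} \cap A'$ with $\psi(\phi(x)) = xg$ for all $x \in A$ and $1_{A_{\infty}} - g \lesssim r_2$ in $A_{\infty}$.

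Now $\Psi := \psi \circ \sigma : M_n \to A_{\infty}$ is an order zero contraction taking values in $gA_{\infty}g$. For any contraction $x \in M_n$ and any $y \in F$, since $g$ commutes with $y$ and $\Psi(x)(1-g) = 0$, we have
\begin{equation*}
[\Psi(x), y] = [\Psi(x), yg] = [\Psi(x), \psi(\phi(y))],
\end{equation*}
whose norm is at most $\|[\sigma(x), \phi(y)]\| < \epsilon$. Applying $\psi$ to $1_B - \sigma(1) \lesssim \phi(r_1)$ yields $\psi(1_B - \sigma(1)) \lesssim \psi(\phi(r_1)) = r_1 g \leq r_1$ in $A_{\infty}$; since $\psi(1_B - \sigma(1)) \leq g$ is orthogonal to $1-g$ and $r_1 \perp r_2$, we conclude
\begin{equation*}
1_{A_{\infty}} - \Psi(1) = (1-g) + \psi(1_B - \sigma(1)) \lesssim r_1 + r_2 \in \overline{aAa}.
\end{equation*}

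It remains to descend from $A_{\infty}$ back to $A$. The cone $C_0((0,1]) \otimes M_n$ is projective, so order zero c.p.c. maps out of $M_n$ lift through surjections; thus $\Psi$ admits a sequence of order zero c.p.c. lifts $\mu_k : M_n \to A$ representing $\Psi$. For sufficiently large $k$ the commutator estimate $\|[\mu_k(x), y]\| < \epsilon$ persists on the finitely many matrix units of $M_n$, hence on all normalized $x$, and the subequivalence $1 - \Psi(1) \lesssim a$ in $A_{\infty}$ descends to $1 - \mu_k(1) \lesssim a$ in $A$ via R\o rdam's $f_{\epsilon}$-characterization applied componentwise. I expect this last descent to be the main technical subtlety, but it is by now routine once one observes that the dominating element $r_1+r_2$ already lies in $A$, so the witnessing element of $A_{\infty}$ can be approximated by elements of $A$ realizing the subequivalence in each large component. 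This exhibits $A$ as tracially $\mc{Z}$-absorbing. For the final assertion, if $B$ is $\mc{Z}$-absorbing then $B$ is tracially $\mc{Z}$-absorbing by Hirshberg--Orovitz, so the conclusion gives $A$ tracially $\mc{Z}$-absorbing; combined with simplicity and nuclearity of $A$, this upgrades to honest $\mc{Z}$-stability of $A$ by the same reference.
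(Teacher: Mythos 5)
Your proof is correct and follows essentially the same route as the paper's: choose orthogonal projections $r_1,r_2$ in $\overline{aAa}$ via property (SP), push an order zero map for $(\phi(F),\epsilon,\phi(r_1),n)$ from $B$ into $A_{\infty}$ through the tracial approximate left inverse associated to $r_2$, verify the commutator and Cuntz-subequivalence estimates there, and then descend to $A$ by lifting and a finite-stage approximation. The only cosmetic difference is that you lift the order zero map directly via projectivity of $C_0((0,1])\otimes M_n$, whereas the paper lifts $\psi$ restricted to $C^*(\phi'(M_n))$ via semi-projectivity and then replaces $\widehat{\psi}_k(\phi(p_1))$ by a nearby projection in $\overline{p_1Ap_1}$ to realize the subequivalence in $A$ --- the same mechanism in slightly different clothing.
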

\begin{proof}
Let $F$ be a finite set of $A$, $\epsilon >0$, $n \in \mathbb{N}$, $z$ be a nonzero positive element in $A$. We may assume $A$ has Property (SP) or $A_{\infty}$ has a weak (SP)-property. (Otherwise $\phi$ is sequentially split, so  in the below argument $\psi$ is the approximate left inverse of $\phi$ such that $\psi(1_B)=g=1_{A_{\infty}}$ and for $\phi(z) \neq 0$ we consider an order zero map $\phi^{'}: M_n \to B$ directly.)  So there are mutually orthogonal nonzero projections $p_1, p_2$ in $\overline{zAz}$ a hereditary subalgebra of $A$ generated by $z$.\\
Set $G=\phi(F)$ a finite set in $B$, then for $\phi(p_1)$ there is an order zero contraction  $\phi':M_n(\mathbb{C}) \to B$ such that 
\begin{enumerate}
\item $1-\phi'(1) \lesssim \phi(p_1)$,
\item $\forall x \in M_n(\mathbb{C})$ such that $\|x\|=1$, $\|[\phi'(x), y]\| < \epsilon$ for every $y\in G$.
\end{enumerate}  
Consider $\widetilde{\psi}=\psi \circ \phi': M_n(\mathbb{C}) \to A_{\infty}$ which is an order zero contraction where $\psi$ is a tracial approximate inverse with a projection $g\in A_{\infty}\cap A'$ such that  $1-g \lesssim p_2$ in $A_{\infty}$.
Then 
\[
\begin{split}
1-\widetilde{\psi}(1)&=1-g +g-\psi(\phi'(1))\\
&=1-g+\psi(1-\phi'(1))\\
&\lesssim 1-g +p_1g \\
&\lesssim p_2+p_1 \lesssim z.
\end{split}
\]
Moreover, for $a\in F$
\[
\begin{split}
[\widetilde{\psi}(x), a]&=\psi(\phi'(x))a-a\psi(\phi'(x))\\
&=\psi(\phi'(x))ga-ag\psi(\phi'(x)) \quad \text{where $\psi(1)=g$}\\
&=\psi(\phi'(x))\psi(\phi(a))-\psi(\phi(a))\psi(\phi'(x))\\
&=\psi([\phi'(x), \phi(a)]).
\end{split}
\]
Therefore 
\begin{equation} \label{E:estimatefororderzeromap}
\|[\widetilde{\psi}(x), a]\| < \epsilon. 
\end{equation}
Since $C^*(\phi'(M_n(\mathbb{C}))) \subset B$ is semi-projective, there is a lift $\widehat{\psi}: C^*(\phi'(M_n(\mathbb{C}))) \to l^{\infty}(\mathbb{N}, A)$ of $\psi$. We write 
\[ \widehat{\psi}(b)=(\widehat{\psi}_n(b))+C_0(\mathbb{N}, A)\] where $\widehat{\psi}_n$ is a $*$-homomorphism from $C^*(\phi'(M_n(\mathbb{C})))$ to $A$ and write $g=[(g_n)_n]$ where $\widehat{\psi}_n(1)=g_n$. \\
Since $\limsup_n \|\widehat{\psi}_n(\phi(1))-p_1g_n \|=0$, we can take a large enough $n$ so that $\|\widehat{\psi}_n(\phi(1))-p_1g_n \|$ is very small. Then there is a projection $p_1' \in \overline{p_1Ap_1}$ satisfying $\widehat{\psi}_n(\phi(1))\sim p_1'$. 
Simultaneously, we also control  $\| [\widehat{\psi}_n(\phi'(x)), a]\| < 2\epsilon$ from (\ref{E:estimatefororderzeromap}). 
Then 
\[
\begin{split}
1-\widehat{\psi}_n(\phi'(1))&=1-g_n+g_n -\widehat{\psi}_n(\phi'(1))\\
&=1-g_n+\widehat{\psi}_n(1-\phi'(1))\\
&\lesssim 1-g_n +\widehat{\psi}_n(\phi(p_1))\\
&\lesssim p_2 +p'_1\\
&\lesssim z.
\end{split}
\]
The last statement follows from \cite[Proposition 2.2]{HO} and \cite[Theorem 4.1]{HO}.
\end{proof}

\begin{prop}\label{P:cuntz}
Let $A, B$ be separable simple unital $C\sp*$-algebras and assume that there is $\phi:A \to B$ a tracially sequentially-split $*$-homomorphism. If $\phi(a) \lesssim \phi(b)$ for two positive elements $a, b \in A$ with the condition that $0$ is not isolated point of $\sigma(b)$ the spectrum of $b$, then $a\lesssim b$.
\end{prop}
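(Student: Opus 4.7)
The plan is to establish, for each $\epsilon > 0$, the Cuntz subequivalence $(a-\epsilon)_+ \lesssim b$ in $A_\infty$; this transfers back to $A$ by passing to representing sequences of any witness in $A_\infty$. We first handle the easy case: if $A$ fails the property (SP), then by the earlier dichotomy proposition $\phi$ is in fact strictly sequentially split, so $a \lesssim b$ follows at once by applying a genuine left inverse $\psi$ to $\phi(a) \lesssim \phi(b)$. Thus we may assume $A$ has (SP).

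Fix $\epsilon > 0$. By the Rordam characterization of $\lesssim$ there exist $\delta > 0$ and $s \in B$ with
\[
s\,\phi((b-\delta)_+)\,s^* = \phi((a-\epsilon)_+).
\]
Using (SP), choose a nonzero projection $e \in \overline{(b-\delta)_+ A (b-\delta)_+}$ and invoke tracial sequential-splitness with $z = e$, obtaining $\psi \colon B \to A_\infty$ and a projection $g \in A_\infty \cap A'$ such that $\psi \circ \phi(x) = xg$ for $x \in A$ and $1-g$ is Murray--von Neumann equivalent to a projection $q \leq e$. Applying $\psi$ to the Rordam equation and using $\psi(s) = g\psi(s)g$ together with $[g,b]=0$ yields
\[
\psi(s)\,(b-\delta)_+\,\psi(s)^* = (a-\epsilon)_+ \, g
\]
in $A_\infty$, i.e.\ $(a-\epsilon)_+ g \lesssim (b-\delta)_+$.

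Decompose $(a-\epsilon)_+ = (a-\epsilon)_+ g + (a-\epsilon)_+(1-g)$ as an orthogonal positive sum. The $g$-piece is already Cuntz below $(b-\delta)_+$ via $\psi(s)$; the $(1-g)$-piece satisfies $(a-\epsilon)_+(1-g) \lesssim 1-g \lesssim e \lesssim (b-\delta)_+$. I combine these into a single witness $R \in A_\infty$ for $(a-\epsilon)_+ \lesssim b$ by setting $R = \psi(s) + S$, where $S \in (1-g) A_\infty (1-g)$ implements $(a-\epsilon)_+(1-g) \lesssim b$ in the corner. One constructs such $S$ from the partial isometry $v$ with $v^*v = 1-g$, $vv^* = q$, together with $w \in A$ satisfying $w(b-\delta)_+ w^* = e$ (available since $e$ lies in the hereditary subalgebra of $(b-\delta)_+$), by compressing the element $(a-\epsilon)_+^{1/2}(1-g)v^*w$ by $1-g$ on both sides. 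With $S$ in the full corner, the cross terms $\psi(s) b S^*$ and $S b \psi(s)^*$ vanish (since $\psi(s) \in gA_\infty g$, $S \in (1-g) A_\infty (1-g)$, and $g$ commutes with $b$), giving $RbR^* = \psi(s) b \psi(s)^* + SbS^* \approx (a-\epsilon)_+$ up to an $O(\delta)$ error from $\psi(s)[b - (b-\delta)_+]\psi(s)^*$. Choosing $\delta$ small at the Rordam step and applying a final Rordam cutoff gives $(a-\epsilon')_+ \lesssim b$ for every $\epsilon' > \epsilon$, whence $a \lesssim b$ in $A$.

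The main technical obstacle is constructing $S$ inside the corner $(1-g)A_\infty(1-g)$ so that $SbS^*$ remains a good approximation of $(a-\epsilon)_+(1-g)$. The natural candidate $(a-\epsilon)_+^{1/2}(1-g)v^*w$ only sits in $(1-g) A_\infty$, and compressing on the right by $1-g$ may disturb the approximation; controlling this requires Lemma~\ref{L:sub} for the Murray--von Neumann moves together with the fact that both $(a-\epsilon)_+(1-g)$ and $b(1-g) = (1-g) b (1-g)$ already live in the corner, so that the Cuntz comparison $(a-\epsilon)_+(1-g) \lesssim b$ in $A_\infty$ can in fact be realized by a witness entirely in $(1-g)A_\infty(1-g)$.
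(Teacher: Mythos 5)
Your opening moves coincide with the paper's: reduce to $(a-\epsilon)_+\lesssim b$, use R\o rdam to get $s\,\phi((b-\delta)_+)\,s^*=\phi((a-\epsilon)_+)$, push this through $\psi$ to obtain $(a-\epsilon)_+g\lesssim (b-\delta)_+$, and split $(a-\epsilon)_+$ into its $g$- and $(1-g)$-parts. The divergence, and the gap, is in how you recombine. By placing the projection $e$ \emph{inside} $\overline{(b-\delta)_+A(b-\delta)_+}$ you force both orthogonal pieces of $(a-\epsilon)_+$ to be compared against the \emph{same} element $(b-\delta)_+$, and orthogonality of the sources alone does not let you add Cuntz subequivalences with a common target; hence your need for a single witness $R=\psi(s)+S$ with vanishing cross terms. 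This is where the argument breaks. The natural witness for the $(1-g)$-piece, $T=(a-\epsilon)_+^{1/2}v^*w$, satisfies $(1-g)T=T$ (since $gv^*=0$) but not $Tg=0$: the cross term $\psi(s)(b-\delta)_+T^*$ equals $\psi(s)g(b-\delta)_+w^*v(1-g)(a-\epsilon)_+^{1/2}$, and $g$ cannot be moved past $w^*v$ to meet $1-g$, because $v\notin A_\infty\cap A'$. Compressing to $S=T(1-g)$ does kill the cross terms but replaces $S(b-\delta)_+S^*$ by $T(b-\delta)_+(1-g)T^*$, which is no longer $(a-\epsilon)_+(1-g)$. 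Your proposed repair --- that the comparison can be realized by a witness inside the corner because ``both elements live in the corner'' --- does not apply: the target $(b-\delta)_+$ (or $b$) does \emph{not} lie in $(1-g)A_\infty(1-g)$, and there is no reason that $(a-\epsilon)_+(1-g)\lesssim (1-g)b(1-g)$ holds; all you know is $1-g\sim q\le e\lesssim (b-\delta)_+$, a comparison whose witness necessarily leaves the corner. A secondary problem is the ``$O(\delta)$ error from $\psi(s)[b-(b-\delta)_+]\psi(s)^*$'': $\delta$ is an \emph{output} of R\o rdam's lemma, not a free parameter, and $\|s\|$ is uncontrolled as $\delta\to 0$, so ``choosing $\delta$ small'' is not available (this one is fixable by targeting $(b-\delta)_+\lesssim b$ rather than $b$ itself).

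The paper's proof avoids all of this with one different choice: instead of a projection inside $\overline{(b-\delta)_+A(b-\delta)_+}$, it picks a nonzero positive $c\in\overline{bAb}$ with $c\perp (b-\delta)_+$ (functional calculus of $b$ on the window $(0,\delta)$) and invokes the tracial splitting with $1-\psi(1)\lesssim c$. Writing $(\phi(a)-\epsilon)_+\sim b_0$ with $b_0\in\overline{\phi((b-\delta)_+)B\phi((b-\delta)_+)}$, one gets $(a-\epsilon)_+g\lesssim\psi(b_0)$ and $(a-\epsilon)_+(1-g)\lesssim c(1-g)$, where now $\psi(b_0)\perp c(1-g)$. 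Since sources and targets are \emph{both} orthogonal pairs, the elementary orthogonal-sum property of $\lesssim$ gives $(a-\epsilon)_+\lesssim\psi(b_0)+c(1-g)\in\overline{bA_\infty b}\,$, hence $\lesssim b$, with no witness to assemble and no cross terms. If you want to salvage your write-up, replace your choice of $e$ by such a $c$; as it stands, the construction of $S$ is a genuine gap.
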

\begin{proof}
Let $g_{\epsilon}(t)=\max\{t-\epsilon, 0\}$. Then the positive part of $a-\epsilon 1_A$ denoted by $(a-\epsilon)_{+}$ is equal to $g_{\epsilon}(a)$. We want to show that $(a-\epsilon)_{+} \lesssim b$ for every $\epsilon >0$. 
Since $\phi$ is a unital map,  note that \[(\phi(a)-\epsilon1_B)_{+}=g_{\epsilon}(\phi(a))=\phi(g_{\epsilon}(a)).\]
Since $\phi(a) \lesssim \phi(b)$ in $B$, there exists $\delta >0$ and  $r\in B$ such that $(\phi(a)-\epsilon)_{+}=r^*(\phi(b)-\delta)_{+}r \sim (\phi(b)-\delta)^{1/2}_{+}r ((\phi(b)-\delta)^{1/2}_{+}r)^*=b_0$ where the latter belongs to $\overline{\phi(b)B\phi(b)}$. 
Since $\sigma(b)\cap (0,\delta) \neq \emptyset$,  we can take a nonzero positive element $c\in A$ such that $(b-\delta)_{+}\perp c$ and take a tracially approximate inverse $\psi$ such that $1-\psi(1) \lesssim c$ in $A_{\infty}$. Then $\psi(b_0)\in \overline{bA_{\infty}b}$ and $\psi(b_0)\perp c$.\\
Then  with $g=\psi(1)$
\[
\begin{split}
(a-\epsilon)_{+}&=(a-\epsilon)_{+}g +(a-\epsilon)_{+}(1-g)\\
&=\psi(\phi((a-\epsilon)_{+}))+(a-\epsilon)_+(1-g)\\
&\lesssim \psi(b_0)+c \\
&\lesssim b \quad \text{in $A_{\infty}$.}
\end{split}
\]
Consequently, $(a-\epsilon)_+\lesssim b$ in $A$, so we are done. 
\end{proof}

We can obtain $a\lesssim b$ without the extra condition that $0$ is not isolated in $\sigma(b)$ in Proposition \ref{P:cuntz} if $d_{\tau}(a) < d_{\tau}(b)$.  But we need some preparations. We first recall that $M_{\infty}(A)$ is the algebraic direct limit of the system $(M_n(A), \phi_n)$ where $\phi_n:M_n(A) \to M_{n+1}(A)$ are usual embeddings given by $a \mapsto a\oplus 0$ (see Definition \ref{D:Cuntz} below) . Then we can define $a \lesssim b$ as in Proposition \ref{P:Subequivalence} for $a ,b \in M_{\infty}(A)^{+}$ or $a,b \in (A\otimes \mathbb{K})^{+}$. We remark that under the usual convention that $A \subset M_n(A) \subset M_{\infty}(A) \subset A\otimes \mathbb{K}$  for $a,b \in A$ we have $a \lesssim b$ in $A$ if $a \lesssim b$ in $M_{\infty}(A)$ or in $A\otimes \mathbb{K}$.  
\begin{defn}\label{D:Cuntz}
Let $A$ be a $C\sp*$-algebra. With a binary operation $a \oplus b= \left( \begin{matrix} a &0 \\ 0 & b \end{matrix}\right)$ for $a\in M_n(A)$ and $b\in M_{m}(A)$ we define the commutative semigroup $W(A)= M_{\infty}(A)/ \sim $ together with the semigroup operation $\langle a \rangle + \langle b \rangle = \langle a \oplus b \rangle$ and the partial order $\langle a \rangle \le \langle b \rangle \leftrightarrow a \lesssim b$. Similarly $\Cu(A)= (A\otimes \mathbb{K})^{+}/ \sim$.  
\end{defn}
\begin{defn}
Let $\Cu(A)_{+}$ denote the set of elements $\eta \in \Cu(A)$ which are not the classes of projections. Similarly, let $W_{+}(A)$ denote the set of elements $\eta \in W(A)$ which are not the classes of projections. Further call an element $a \in (A\otimes \mathbb{K})^{+} ( \in M_{\infty}(A)^{+})$ \emph{purely positive} if $\langle a \rangle \in \Cu(A)_{+}( \in W(A)_{+})$ respectively.
\end{defn}
\begin{lem} \cite[Lemma 3.2]{Phillips:large} \label{L:purelypositive}
Let $A$ be a stably finite simple unital $C\sp*$-algebra. $a \in (A\otimes \mathbb{K})^{+}$ is purely positive if and only if $0$ is not isolated point of $\sigma(a)$. 
\end{lem} 
\begin{lem}\cite[Lemma 3.6]{Phillips:large}\label{L:alternative}
Let $A$ be a stably finite simple unital $C\sp*$-algebra which is not of type I.  Let $p \in A\otimes \mathbb{K}$ be a nonzero projection, let $n$ be a positive integer and $\xi \in \Cu(A)\setminus \{0\}$. Then there exist $\mu, \kappa \in W_{+}(A)$ such that $\mu \le \langle p \rangle \le \mu+ \kappa  $ and $n\kappa \le \xi$. 
\end{lem}
 
\begin{thm}\label{T:Strictcomparision}
Let $A, B$ be stably finite, simple, nuclear(exact), separable, infinite dimensional  $C\sp*$-algebras and $\phi:A \to B$ be a tracially sequentially-split $*$-homomorphism.  If $B$ satisfies the strict comparison property, so does $A$.   
\end{thm}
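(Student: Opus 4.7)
The plan is to reduce the claim to Proposition \ref{P:cuntz}, which guarantees that a unital tracially sequentially-split $*$-homomorphism reflects Cuntz subequivalence. Suppose $a,b\in M_\infty(A)_+$ satisfy $d_\tau(a)<d_\tau(b)$ for every $\tau\in T(A)$, and, without loss of generality, $a,b\in M_n(A)$ for some $n$. I will pass to $\phi_n := \phi\otimes\id_{M_n}\colon M_n(A)\to M_n(B)$, which remains a unital tracially sequentially-split $*$-homomorphism between simple separable nuclear $C^*$-algebras under the identification $M_n(A_\infty)\cong (M_n(A))_\infty$ already exploited in Theorem \ref{T:simple}, and note that $M_n(B)$ inherits strict comparison from $B$.

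The first step is to transport the trace hypothesis into $B$. For any $\sigma\in T(M_n(B))$, the pullback $\sigma\circ\phi_n$ is a tracial state on $M_n(A)$ by unitality of $\phi_n$. Since $\phi_n$ commutes with continuous functional calculus, $d_\sigma(\phi_n(a)) = d_{\sigma\circ\phi_n}(a)$ and likewise for $b$; applying the hypothesis to $\tau = \sigma\circ\phi_n$ yields
\[
d_\sigma(\phi_n(a)) < d_\sigma(\phi_n(b)) \quad \text{for every } \sigma\in T(M_n(B)).
\]
The second step is then immediate: strict comparison in $M_n(B)$ forces $\phi_n(a)\lesssim \phi_n(b)$, and Proposition \ref{P:cuntz} applied to the unital tracially sequentially-split map $\phi_n$ delivers $a\lesssim b$ in $M_n(A)$, hence in $M_\infty(A)$.

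Unlike the preceding theorems in this section, this argument does not require an SP dichotomy; Proposition \ref{P:cuntz} is already strong enough to carry the Cuntz comparison back from $B$ to $A$. The only genuinely nontrivial point is that the matrix amplification $\phi\otimes\id_{M_n}$ preserves the tracially sequentially-split property, so that Proposition \ref{P:cuntz} can be applied at the $M_n$ level; this, however, is routine and is the same observation used in the finiteness argument of Theorem \ref{T:simple}.
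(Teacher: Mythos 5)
Your argument is correct and follows the paper's own route: transfer the $d_\tau$-inequality to $B$, invoke strict comparison there, and pull the Cuntz subequivalence back through Proposition \ref{P:cuntz}. The only differences are cosmetic improvements: you justify the trace transfer by the direct (and sufficient) pullback $\sigma\mapsto\sigma\circ\phi_n$, whereas the paper instead establishes surjectivity of $T(\phi)$ via the approximate left inverses $\psi_n$ (a stronger statement than is actually needed for this implication), and you make the reduction to matrix amplifications $\phi\otimes\id_{M_n}$ explicit, which the paper leaves implicit.
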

\begin{proof}
Since $\phi$ is tracially sequentially-split,  for each $n$  there exist  a projection $g_n \in A_{\infty}\cap A'$ and a $*$-homomorphism $\psi_n:B \to A_{\infty}$ such that $\tau(1-g_n)< 1/n$ for every $\tau \in T(A_{\infty})$ and $\psi_n(\phi(a))=ag_n$ for all $a\in A$.   Then we claim that $T(\phi): T(B) \to T(A)$ is surjective. Let $\tau$ be a trace in $A$. Then we consider the weak-$*$ limit of traces $\tau_{\infty} \circ \psi_n$ in $T(B)$ denoted by $w^*-\lim (\tau_{\infty}\circ \psi_n)$ where $\tau_{\infty}$ is the induced trace on $A_{\infty}$. Then  for $a\in A$
\[
\begin{split}
T(\phi)([w^*-\lim (\tau_{\infty}\circ \psi_n)])(a)&=[w^*-\lim (\tau_{\infty} \circ \psi_n)](\phi(a)))\\
&=\lim_n \tau_{\infty}(\psi_n(\phi(a)))\\
&=\lim \tau_{\infty}(ag_n)\\
&=\tau_{\infty}(a)-\lim_n \tau_{\infty}(a(1-g_n))\\
&=\tau(a).
\end{split} 
\]
Now given two positive elements $a,b$ in $A$ assume that  $d_\tau(a) < d_{\tau}(b)$ for all  $\tau\in T(A)$. So $d_{\tau}(\phi(a)) < d_{\tau}(\phi(b))$ for all $\tau \in T(B)$. Since $B$ satisfies the strict comparison property, it follows that $\phi(a) \lesssim \phi(b)$. \\ 
 Then we split two cases; when $b$ is purely positive, then by Lemma \ref{L:purelypositive} and Proposition \ref{P:cuntz}, $a\lesssim b$. If $b$ is not purely positive, we may assume that $b$ is Cuntz equivalent to a projection $p$. Now consider again two cases;
\begin{itemize}  
\item[(i)] $a$ is Cuntz equivalent to a projection,
\item[(ii)] $a$ is not  Cuntz equivalent to a projection.
\end{itemize}

In the case of (i) the same proof works. That is, $\langle b\rangle -\langle a \rangle$ is continuous on $T(A)$.

In the case of (ii)  $a$ is purely positive. Therefore, for any $\epsilon > 0$ take a function $f:[0, \infty) \rightarrow [0,1]$ such that 

$f(\lambda) > 0 $ for $\lambda \in (0, \epsilon)$ and $f(\lambda) = 0$ for $\lambda \in \{0\} \cup [\epsilon, \infty)$. Then, $f(a) \not= 0$. Therefore, $\rho = \inf\{d_\tau (f(a)) \colon \tau \in \mathrm{T}(A)\}$ satisfies $\rho > 0$.

We claim that for any $\tau \in \mathrm{T}(A)$, $d_\tau((a - \epsilon)_+) + \rho < d_\tau(b)$.

Indeed,
\begin{align*}
d_\tau((a-\epsilon)_+) + \rho
&\leq d_\tau((a-\epsilon)_+) + d_\tau(f(a))\\
&= d_\tau((a-\epsilon)_+ + f(a)) \leq d_\tau(a) < d_\tau(b).
\end{align*}
Choose $n \in \mathbb{N}$ such that $\frac{1}{n} < \rho$. Use Lemma \ref{L:alternative} to find $c, d \in (bAb \otimes \mathbb{K})_+ \backslash\{0\}$ such that $\langle c \rangle \leq \langle b \rangle \leq \langle c\rangle + \langle d\rangle$, and $n\langle d\rangle \leq \langle 1\rangle$ in $\Cu(A)$, where $c$ is purely positive. Note that for $\tau \in \mathrm{T}(A)$, $d_\tau(\langle d\rangle) \leq \frac{1}{n} < \rho$, $d_\tau(\langle c\rangle) > d_\tau(\langle b\rangle) - \rho> d_\tau(\langle (a - \epsilon)_+\rangle)$. Again,  by Lemma \ref{L:purelypositive} and Proposition \ref{P:cuntz} $(a - \epsilon)_+ \preceq c$ in $A$ since $c$ is purely positive.

Therefore, from $\langle c\rangle \leq \langle b\rangle$ in $\Cu(A)$, $(a - \epsilon)_+ \preceq b$ in $A$. Since $\epsilon > 0$ is arbitrary, we conclude that $a \preceq b$ in $A$.
\end{proof}

\section{Applications}
\begin{defn}\cite[Definition 2.5]{LeeOsaka}
Let $A$ and $B$ be unital $C\sp*$-algebras and $G$ a discrete group. Given two actions  $\alpha:G \curvearrowright A$, $\beta:G \curvearrowright B$ respectively, an equivariant $*$-homomorphism $\phi:(A, \alpha) \to (B, \beta) $ is called  $G$-tracially sequentially-split, if for every nonzero positive element $z\in A_{\infty}$ there exists an equivariant tracial approximate left inverse $\psi:(B, \beta) \to (A_{\infty},\alpha_{\infty})$. 
\end{defn}
The following definition is due to N.C. Philiips. 
\begin{defn}\cite[Definition 1.2]{Phillips:tracial})
Let $G$ be a finite group and $A$  an infinite dimensional simple separable unital finite $C\sp*$-algebra. We say that $\alpha: G \curvearrowright A$ has the tracial Rokhlin property if  for every finite set $F \subset A$, every $\epsilon>0$, any nonzero positive element $x\in A$ there exist $\{e_g\}_{g\in G}$ mutually orthogonal projections such that 
\begin{enumerate}
\item $\| \alpha_g(e_h)-e_{gh} \| \le \epsilon$, \quad $\forall g, h \in g$,
\item $\|  e_ga -ae_g \| \le \epsilon$, \quad $\forall g \in G$, $\forall a \in F$,
\item  Write $e=\sum_{g} e_g$, and $1-e$ is Murray-von Neumann equivalent to a projection in $\overline{xAx}$. 
\end{enumerate}
\end{defn}

Then the following reformulation is well-known and appeared in \cite{LeeOsaka} without a proof. Here we include a proof. 

\begin{thm}\cite[Theorem 4.2]{LeeOsaka}\label{T:tracialRokhlinaction}
Let $G$ be a finite group and $A$ an infinite dimensional separable  simple unital  finite $C\sp*$-algebra. Then $\alpha: G \curvearrowright A$ has the tracial Rokhlin property if  and only if for any nonzero positive element $x \in A^{++}_\infty$ there exist a mutually orthogonal  projections  $e_g$'s in $A_{\infty}\cap A'$ such that 
\begin{enumerate}
\item $\alpha_{\infty, g}(e_h)=e_{gh}$, \quad $\forall g,h\in G$ where $\alpha_{\infty}:G \curvearrowright A_{\infty} $ is the induced action,  
\item 
$1-\sum_{g} e_g$ is Murray-von Neumann equivalent to a projection in $\overline{xA_{\infty}x}$.
\end{enumerate}
\end{thm}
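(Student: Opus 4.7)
The plan is to prove both directions via the standard reindexing/lifting procedure between $A$ and its sequence algebra $A_{\infty}$.

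\textbf{Forward direction} ($\Rightarrow$): Let $x \in A_{\infty}$ be a nonzero positive element, represented by a sequence $(x_n)_n$ of positive elements in $A$; after passing to a subsequence I may assume each $x_n \neq 0$. Fix a countable dense sequence $\{a_k\}_k \subset A$ and set $F_n = \{a_1,\dots,a_n\}$. For each $n$ I apply the tracial Rokhlin property of $\alpha$ to the triple $(F_n, 1/n, x_n)$, obtaining mutually orthogonal projections $\{e_g^{(n)}\}_{g \in G}\subset A$ together with a partial isometry $w_n \in A$ such that $w_n^{*} w_n = 1 - \sum_g e_g^{(n)}$ and $w_n w_n^{*}$ is a projection in $\overline{x_n A x_n}$. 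Setting $e_g := [(e_g^{(n)})_n]$ and $w := [(w_n)_n]$ in $A_{\infty}$, the approximate Rokhlin relations promote to equalities in the quotient: mutual orthogonality and the projection equations survive; condition (1) of the Rokhlin definition gives $\alpha_{\infty,g}(e_h) = e_{gh}$; condition (2) applied to the dense sequence forces $e_g \in A_{\infty} \cap A'$. Finally $w^{*}w = 1 - \sum_g e_g$ and $ww^{*}$ is a projection in $\overline{x A_{\infty} x}$, which is exactly the MvN equivalence required by (2).

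\textbf{Reverse direction} ($\Leftarrow$): Fix a triple $(F, \epsilon, x)$ with $F \subset A$ finite, $\epsilon > 0$ and $x \in A$ nonzero positive. Viewing $x$ as a constant sequence in $A_{\infty}$, the hypothesis produces mutually orthogonal projections $\{e_g\}_{g \in G}\subset A_{\infty}\cap A'$ satisfying (1) and (2). I then lift each $e_g$ to a representing sequence of honest projections in $A$: starting from any self-adjoint lift $(f_n^{(g)})_n$, one has $\|(f_n^{(g)})^2 - f_n^{(g)}\| \to 0$, so applying a continuous function that is $0$ near $0$ and $1$ near $1$ yields projections $e_g^{(n)} \in A$ with $e_g^{(n)} - f_n^{(g)} \to 0$. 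A further small perturbation makes the family $\{e_g^{(n)}\}_{g \in G}$ mutually orthogonal for all sufficiently large $n$. Lifting the partial isometry in $A_{\infty}$ witnessing $1 - \sum_g e_g \sim p \in \overline{x A_{\infty} x}$ to a sequence $(w_n)$ gives $w_n^{*} w_n$ close to $1 - \sum_g e_g^{(n)}$ and $w_n w_n^{*}$ close to a projection in $\overline{x A x}$; a standard close-projections argument then promotes this to an honest Murray--von Neumann equivalence in $A$ for large $n$. Combined with the approximate commutation and approximate equivariance of $\{e_g^{(n)}\}$ inherited from $e_g \in A_{\infty}\cap A'$ and $\alpha_{\infty,g}(e_h)=e_{gh}$, this produces exactly the three clauses of the tracial Rokhlin definition for the triple $(F, \epsilon, x)$.

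The main technical obstacle lies in the reverse direction: upgrading the Murray--von Neumann equivalence inside $A_{\infty}$ to one inside $A$ with target projection genuinely in $\overline{xAx}$, while simultaneously controlling the approximate commutation and approximate equivariance of the lifted projections. This is handled by the standard fact that two projections at distance less than $1$ in a $C^*$-algebra are unitarily (hence Murray--von Neumann) equivalent, together with the observation that elements of $\overline{x A_{\infty} x}$ admit representing sequences in $\overline{x A x}$ when $x$ is a constant sequence. A minor secondary obstacle in the forward direction is ensuring the representing sequence $(x_n)$ can be chosen with $x_n \neq 0$ for all $n$, which is immediate after passing to a subsequence since $x \neq 0$ in $A_{\infty}$.
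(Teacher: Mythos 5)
Your forward direction is essentially the paper's own argument: represent $x$ by a sequence $(x_n)_n$, exhaust $A$ by finite sets $F_n$, apply the tracial Rokhlin property to $(F_n,1/n,x_n)$, and pass to the quotient $A_\infty$ where the approximate relations become exact. One small wording issue: you say you ``pass to a subsequence'' to arrange $x_n\neq 0$, but reindexing along a subsequence does not preserve the class of $x$ in $A_\infty$; the correct (and equally easy) fix is to perturb the representing sequence, replacing any vanishing entries by small nonzero positive elements, which changes nothing in $c_0(\mathbb{N},A)$. The more substantive difference is that the paper's proof stops after the forward implication and never addresses the converse, whereas you supply it: lifting the exact projections $e_g\in A_\infty\cap A'$ to sequences of honest, eventually mutually orthogonal projections in $A$, lifting the partial isometry witnessing $1-\sum_g e_g\sim p\in\overline{xA_\infty x}$, and using the standard fact that close projections are Murray--von Neumann equivalent to recover the three clauses of the definition for a given triple $(F,\epsilon,x)$. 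That argument is correct, granting the standard perturbation lemmas you invoke and the observation (which you state and which does hold) that a projection in $\overline{xA_\infty x}$ for constant $x$ admits a representing sequence of projections in $\overline{xAx}$ after functional calculus inside that hereditary subalgebra. In short, your proof is a superset of the paper's: identical where they overlap, and more complete in that it actually establishes the equivalence claimed in the statement rather than only one implication.
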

\begin{proof}
Let  $x (\neq 0)$ in $A^{++}_{\infty}$. Then we can represent it as a sequence $(x_n)_n \in l^{\infty}(A)$ where for each $n$ $x_n \neq 0$ and $\| x_n \|>\delta >0$ for some $\delta$ since we are allowed to do so up to  $C_0(\mathbb{N}, A)$. Since $A$ is separable, we can also take an increasing sequence of finite sets $F_1 \subset F_2 \subset \cdots $ so that $\displaystyle \overline{\cup_n F_n } =A$. Then for each $n$  there exist  mutually orthogonal projections $\{e_{g,n}\}_{g\in G}$ such that  
\begin{enumerate}
\item $\| \alpha_g(e_{h,n})-e_{gh,n} \| \le \frac{1}{n}$, \quad $\forall g, h \in g$,
\item $\|  e_{g,n}a -ae_{g,n} \| \le \frac{1}{n}$, \quad $\forall g \in G$, $\forall a \in F_n$,
\item $1-\sum_{g}e_{g,n}$ is Murray-von Neumann equivalent to a projection $q_n$ in $\overline{x_nAx_n}$. 
\end{enumerate} 
Now we take $e_g=[(e_{g,n})_n]$. Then it is easily shown that $e_g$'s are mutually orthogonal projections in $A_{\infty}\cap A'$ and $\alpha_{\infty,g}(e_h)=e_{gh}$. Also
\[ (1-\sum_g e_{g,n})_n \sim (q_n)_n \in \overline{(x_n)_n l^{\infty}(\mathbb{N}, A)(x_n)_n}. \] Then it follows that $1-\sum_g e_g \sim [(q_n)_n] \in \overline {xA_{\infty}x}$. 

For the other direction,  for a finite set $F$, $\epsilon>0$, and a positive nonzero $x$ in $A$, we embed $x$ in $A_{\infty}$ as a constant sequence and  denote it by $x$ again.  Then there are mutually orthogonal projections $e_g=[(e_{g,n})_n]$ in a central sequence algebra such that  $1-\sum_g e_g$ is Murray-von Neumann equivalent to a projection $r =[(r_n)_n]$ in $\overline{xA_{\infty}x}=(\overline{xAx})_{\infty}$. Then 
we have 
\[ \begin{aligned}
&(1)\, \limsup_{n \to \infty} \| e_{g,n}a-ae_{g,n} \|=0 \quad \text{for every $a \in A$}\\
&(2)\, \limsup_{n \to \infty} \| \alpha_{h}(e_{g,n})- e_{hg,n}\|=0  \quad \text{for every $g, h\in G$}\\
&(3)\, \limsup_{n \to \infty} \|   e_{g,n}e_{g,n}-e_{g,n}\|=0   \quad \text{for every $g\in G$}\\
&(4)\, \limsup_{n \to \infty} \| e_{g,n}e_{h,n}\|=0  \quad \text{for every $g \neq h \in G$}
\end{aligned}
\]
Also, there is a sequence $(v_n)_n$ such that \[(5)\,\limsup \|(1-\sum_g e_{g,n})- v_n^*v_n\|=\limsup \| r_n -v_nv_n^*    \|=0. \] Using (3) and (4) for a large enough $n$ we can assume $e_{g,n}$'s are mutually orthogonal projections within the difference $\epsilon/4|G|$ where $|G|$ is the cardinality of the finite group $G$. In other words, there exist $N$ such that if $n\ge N$ then  for $g\in G$ we can take $e_{g,n}'$ a projection such that $\|e_{g,n}'-e_{g,n} \| \le \epsilon /4|G|$ and $e'_{g,n}\perp e'_{h,n}$ for $g\neq h$. Thus for a finite set $F$, we can assume that $ \|e'_{g,n}a -ae'_{g,n} \| < \epsilon$ for $a\in F$.  Moreover $\|\alpha_h(e'_{g,n})-e'_{hg,n}) \| < \epsilon$ for such $n$. Finally, if necessary, by changing $r_n$ with a projection we have that both $\| (1-\sum_{g} e'_{g,n})-v^*_nv_n\|$ and $\|r_n-v_nv^*_n\|$  are small enough. Thus we have a partial isometry $w_n$ in $A$ such that $\|v_n- w_n\| < \epsilon$, $1-\sum_g e'_{g,n}=w^*_nw_n$, $w_nw^*_n=r_n \in \overline{xAx}$. So we are done.   
\end{proof}

Let $\sigma:G \curvearrowright C(G)$ be the canonical translation action. We need the following theorem. 

\begin{thm}\cite[Theorem 4.4]{LeeOsaka}\label{T:tracialRokhlinaction2}
Let $G$ be a finite group and $A$ an infinite dimensional simple separable unital finite $C\sp*$-algebra. Then $\alpha:G \curvearrowright A$ has the tracial Rokhlin property if  and only if for every nonzero  element $x$ in $A^{++}_{\infty}$ there exists a $*$-equivariant homomorphism $\phi$ from $(C(G), \sigma)$ to $(A_{\infty}\cap A', \alpha_{\infty})$ such that $1_{A_{\infty}}-\phi(1_{C(G)})$ is Murray-von Neumann equivalent to a projection in a hereditary $C\sp*$-subalgebra generated by $x$ in $A_{\infty}$.  
\end{thm}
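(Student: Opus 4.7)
The plan is to derive this statement as an essentially immediate corollary of Theorem \ref{T:tracialRokhlinaction}, by making explicit the fact that $(C(G),\sigma)$ is the universal unital $G$-$C^*$-algebra classifying an equivariantly-permuted family of orthogonal projections that sum to the unit.

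First I would record the structural observation: $C(G)$ is linearly spanned by the characteristic functions $\{\chi_g\}_{g \in G}$ of the singletons, which are mutually orthogonal projections satisfying $\sum_{g\in G}\chi_g = 1_{C(G)}$ and $\sigma_h(\chi_g) = \chi_{hg}$ for all $h,g \in G$. From this I would deduce, by direct verification, that for any unital $G$-$C^*$-algebra $(D,\beta)$ the assignment $\phi \mapsto \{\phi(\chi_g)\}_{g\in G}$ establishes a natural bijection between equivariant $*$-homomorphisms $\phi:(C(G),\sigma)\to (D,\beta)$ and families $\{p_g\}_{g \in G}$ of mutually orthogonal projections in $D$ satisfying $\beta_h(p_g)=p_{hg}$. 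The inverse simply sends $\{p_g\}$ to the map determined by $\phi(\chi_g)=p_g$ and extended linearly. Under this correspondence one has the identity $\phi(1_{C(G)})=\sum_{g}p_g$.

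Then I would specialize the bijection to $(D,\beta)=(A_{\infty}\cap A',\alpha_{\infty})$. This transforms the right-hand side of the claimed equivalence into the following statement: for every nonzero positive $x\in A_\infty$ there exists a family of mutually orthogonal projections $\{e_g\}_{g\in G}\subset A_\infty\cap A'$ with $\alpha_{\infty,h}(e_g)=e_{hg}$ such that $1-\sum_{g\in G}e_g$ is Murray--von Neumann equivalent to a projection in $\overline{xA_\infty x}$. But this is exactly the characterization of the tracial Rokhlin property furnished by Theorem \ref{T:tracialRokhlinaction}, so both directions of the ``if and only if'' follow at once.

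I do not anticipate any substantive obstacle. The only verifications required are routine: that the map from the data $\{p_g\}$ really extends to a well-defined $*$-homomorphism on $C(G)\cong \mathbb{C}^{|G|}$ (immediate from the orthogonality of the $p_g$'s and the fact that the $\chi_g$'s are a complete set of orthogonal minimal projections), and that the equivariance of $\phi$ is equivalent to the covariance relation $\beta_h(p_g)=p_{hg}$ (immediate from $\sigma_h(\chi_g)=\chi_{hg}$). The Murray--von Neumann condition transfers verbatim because $\phi(1_{C(G)})=\sum_g e_g$.
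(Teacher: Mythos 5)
Your proposal is correct and takes essentially the same approach as the paper: the paper's forward direction defines $\phi(f)=\sum_{g}f(g)e_g$ from the projections supplied by Theorem \ref{T:tracialRokhlinaction}, and its converse recovers $e_g=\phi(\chi_g)$, which are exactly the two halves of the bijection you describe. Your packaging of this as a universal-property correspondence for $(C(G),\sigma)$ is only a more systematic phrasing of the same computations, including the identity $\phi(1_{C(G)})=\sum_g e_g$ used to transfer the Murray--von Neumann condition.
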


\begin{cor}\cite[Corollary 4.6]{LeeOsaka}
Let $G$ be a finite group and $A$ an infinite dimensional simple separable unital finite $C\sp*$-algebra. Assume  $\alpha:G \curvearrowright A$ has the tracial Rokhlin property. Then the map $1_{C(G)}\otimes \id_A : (A, \alpha) \to (C(G)\otimes A, \sigma \otimes \alpha)$ is $G$-tracially sequentially-split. 
\end{cor}
\begin{proof}
$(A_{\infty}\otimes A, \alpha_{\infty}\otimes \alpha)$ can be identified with $(A_{\infty}, \alpha_{\infty})$ by the map sending $\mathbf{a}\otimes x$ to $\mathbf{a}x$. Then we can easily show that $\phi\otimes \id_A$ is the equivariant tracial approximate inverse for every nonzero $x$ in $A^{++}_{\infty}$ where $\phi$ is the equivariant $*$-homomorphism from $C(G), \sigma) \to (A_{\infty}, \alpha_{\infty})$ corresponding to $x$ by Theorem \ref{T:tracialRokhlinaction2}. 
\end{proof}

Then we reprove that $A\rtimes_{\alpha}G$ inherits the interesting approximation properties from $A$ when a finite group action of $G$ has the tracial Rokhlin property  through the notion of tracially sequentially-split map as one of our main results.  We denote by $\phi\rtimes G$ a map from $A\rtimes_{\alpha} G$ to $B\rtimes_{\beta} G$ as a natural extension of an equivariant map $\phi:(A,\alpha) \to (B,\beta)$ where $\alpha:G \curvearrowright A$ and $\beta:G \curvearrowright B$. In the following, we denote $u:G \to U(A\rtimes_{\alpha}G)$ as the implementing unitary representation for the action $\alpha$ so that we write an element of $A\rtimes_{\alpha}G$ as $\sum_{g\in G}a_gu_g$. The embedding of $A$ into $A\rtimes_{\alpha}G$ is $a \mapsto au_e$. 

\begin{lem}\cite[Proposition 1.12]{Phillips:tracial}\label{L:projection}
Let $A$ be an infinite dimensional simple unital $C\sp*$-algebra with Property (SP), and $\alpha:G\curvearrowright A$ be an action of a finite group $G$ on $A$ such that $A\rtimes_{\alpha}G$ is also simple.  Let $B \subset A\rtimes_{\alpha}G$ be a nonzero hereditary $C\sp*$-subalgebra. Then there exists a nonzero projection $p\in A$ which is Murray-von Neumann equivalent to a projection in $B$ in $A\rtimes_{\alpha}G$.  
\end{lem}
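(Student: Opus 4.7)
Let $E\colon A\rtimes_\alpha G\to A$ denote the canonical faithful conditional expectation, $E(\sum_{g\in G}a_gu_g)=a_e$. The plan is to produce inside the hereditary subalgebra $B$ a positive element which is norm-close to a positive element of $A$, and then to invoke property (SP) of $A$ together with a standard stability-of-projections argument to transfer a projection from $A$ into $B$.

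Pick a nonzero positive contraction $b\in B$; then $E(b)\in A_+$ is nonzero because $E$ is faithful on positives. For given $\epsilon>0$, approximate $b$ in norm by a finite sum $b_0=\sum_{g\in S}a_gu_g$, with $e\in S\subset G$ finite, $\|b-b_0\|<\epsilon$, and $a_e=E(b_0)$ close to $E(b)$. Because $A\rtimes_\alpha G$ is simple while $A$ itself is simple and infinite-dimensional, each $\alpha_g$ with $g\neq e$ is outer on $A$; a Kishimoto-type excision argument, iterated over the finite set $S\setminus\{e\}$, then produces a positive contraction $x\in A$ with $\|xa_ex\|$ close to $\|a_e\|$ and $\|xa_g\alpha_g(x)\|<\epsilon/|S|$ for every $g\in S\setminus\{e\}$. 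Using the covariance relation $u_gx=\alpha_g(x)u_g$ we expand
\[xb_0x=xa_ex+\sum_{g\in S\setminus\{e\}}xa_g\alpha_g(x)u_g,\]
whence $\|xb_0x-xa_ex\|<\epsilon$, and therefore $\|xbx-xa_ex\|<2\epsilon$ after absorbing the $\|b-b_0\|$-error.

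The element $xa_ex\in A_+$ is nonzero for small $\epsilon$, so by property (SP) of $A$ the hereditary subalgebra $\overline{xa_exAxa_ex}$ contains a nonzero projection $p\in A$ with $p\lesssim xa_ex$ in $A$. On the other hand $xbx\in B$ by the hereditary property of $B$ (since $0\le xbx\le\|x\|^2b$), and $xbx$ is arbitrarily close in norm to $xa_ex$. A standard perturbation lemma for projections inside hereditary subalgebras (two sufficiently norm-close positive elements admit Murray-von Neumann equivalent cutoff projections in their respective hereditary subalgebras) then yields a projection $p'\in\overline{xbx(A\rtimes_\alpha G)xbx}\subseteq B$ with $p\sim p'$ in $A\rtimes_\alpha G$. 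This pair $p\in A$ and $p'\in B$ are the desired projections.

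The main obstacle is the Kishimoto excision step: constructing a single positive contraction $x\in A$ that keeps $xa_ex$ bounded below in norm while simultaneously driving every cross term $xa_g\alpha_g(x)$ with $g\neq e$ to be small. Its validity depends on the pointwise outerness of $\alpha$---precisely what the simplicity of $A\rtimes_\alpha G$ provides---together with the infinite-dimensionality and simplicity of $A$; these are exactly the nontrivial hypotheses of the statement.
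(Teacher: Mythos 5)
The paper itself does not prove this lemma; it is imported verbatim from Phillips's Proposition 1.12, so there is no in-paper argument to compare against, and your route (conditional expectation, Kishimoto excision, Property (SP), perturbation) is indeed the standard one in the literature. However, as written it has a genuine gap at its central step: you claim that simplicity of $A\rtimes_\alpha G$, together with simplicity and infinite-dimensionality of $A$, forces every $\alpha_g$ with $g\neq e$ to be outer. That implication is false. Take $A=M_{2^\infty}$ and $G=(\mathbb{Z}/2)^2$ acting by $\mathrm{Ad}(u)$ and $\mathrm{Ad}(v)$ for anticommuting self-adjoint unitaries $u=\sigma_z\otimes 1\otimes\cdots$, $v=\sigma_x\otimes 1\otimes\cdots$; this is a genuine action, every $\alpha_g$ is inner, and yet $A\rtimes_\alpha G\cong A\otimes M_2$ is simple (the unitaries $u^*u_g,\ v^*u_h$ commute with $A$ and generate a copy of $M_2$). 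Worse, the excision conclusion itself fails there: for $b=p\otimes e_{11}$ one has $E(b)=\tfrac12\,p\otimes 1$, and no positive $x\in A\otimes 1$ makes $xbx$ close to $xE(b)x$ while keeping $\|xE(b)x\|$ bounded below, because $\|xpx\otimes e_{11}-\tfrac12\,xpx\otimes 1\|=\tfrac12\|xpx\|$. So the Kishimoto step is not merely unjustified under the stated hypothesis; it is unavailable. (The lemma's conclusion still holds in that example by comparison inside the UHF algebra $A\otimes M_2$, so the statement is not contradicted, and in the paper's actual application the action has the tracial Rokhlin property, hence is pointwise outer, so your argument would go through with that added hypothesis. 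Proving the lemma exactly as stated requires treating the inner part of the action separately.)

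Two smaller defects, both repairable. First, $xbx\le\|x\|^2b$ is false for general self-adjoint $x$ (try $b=e_{11}$, $x=e_{12}+e_{21}$ in $M_2$), so $xbx$ need not lie in $B$; you should work with $b^{1/2}x^2b^{1/2}\le\|x\|^2b$, which does lie in $B$ and is Murray--von Neumann equivalent as a positive element to $xbx$ via $xb^{1/2}$. Second, the concluding perturbation has its quantifiers in the wrong order: a projection chosen in $\overline{xa_exAxa_ex}$ need not satisfy $p\lesssim xbx$ merely because $\|xbx-xa_ex\|<2\epsilon$. You should instead apply Property (SP) to the hereditary subalgebra of $\bigl(xa_ex-\delta\bigr)_+$ for some $\delta>2\epsilon$ (nonzero since $\|xa_ex\|$ is bounded below), obtaining $p\lesssim\bigl(xa_ex-2\epsilon\bigr)_+\lesssim xbx$, and then use that a projection Cuntz-subequivalent to a positive element is Murray--von Neumann equivalent to a projection in the hereditary subalgebra that element generates. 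These two points are routine; the outerness claim is the real gap.
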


\begin{thm}\label{C:GTSS}
Let $G$ be a finite group and $A$  a separable  infinite dimensional simple unital finite $C\sp*$-algebra. Suppose that $\alpha:G \curvearrowright A$ has the tracial Rokhlin property. Then the map $ (1_{C(G)} \otimes id_A) \rtimes  G$ from $A\rtimes_{\alpha} G$ to $(C(G)\otimes A)\rtimes_{\sigma\otimes\alpha} G$ is a tracially sequentially-split $*$-homomorphism. 
\end{thm}
\begin{proof}
We may assume that $A$ has Property (SP). It follows that $A_{\infty}$  has  a weak (SP)-property.  Take a nonzero positive element $z$ in $(A\rtimes_{\alpha}G)^{++}_{\infty} $.  Note that $\alpha:G\curvearrowright A$ is outer so that $A\rtimes_{\alpha}G$ is simple. Then there is a projection $p\in A^{++}_{\infty}$ which is Murray-von Neumann equivalent to a projection in $\overline{z (A\rtimes_{\alpha}G)_{\infty} z}$ by Lemma \ref{L:projection}. Then we take $\phi:C(G) \to A_{\infty}$ such that  $1_{A_{\infty}}-\phi(1_{C(G)})$ is Murray-von Neumann equivalent to a projection $r$ in $pA_{\infty}p$.  Since $r\le p$ and $\overline{z (A\rtimes_{\alpha}G)_{\infty} z}$ is a hereditary $C\sp*$-algebra, $1_{A_{\infty}}-\phi(1_{C(G)})$ is Murray-von Neumann equivalent to a projection in $\overline{z (A\rtimes_{\alpha}G)_{\infty} z}$  by Lemma \ref{L:sub}. \\
Consider the following diagram depending on $z$ 
\[ \xymatrix{ (A,\alpha) \ar[rd]_{1\otimes \id_A} \ar@{-->}[rr] && (A_{\infty}, \alpha_{\infty}) &\\
                          & (C(G)\otimes A, \sigma\otimes \alpha) \ar[ur]_{\phi\otimes \id_A} } \]  
By applying the crossed product functor, we obtain 
\[ \xymatrix{ (A\rtimes_{\alpha}G) \ar[rd]_{(1 \otimes \id_A)\rtimes G} \ar@{-->}[rr]^{\iota\rtimes G} && (A_{\infty}\rtimes_{ \alpha_{\infty}} G) \to (A\rtimes_{\alpha} G)_{\infty} &\\
                          & (C(G)\otimes A)\rtimes_{\sigma\otimes \alpha}G \ar[ur]_{(\phi\otimes \id_A) \rtimes G} } \]  
Here the map  $(A_{\infty}\rtimes_{ \alpha_{\infty}} G) \to (A\rtimes_{\alpha} G)_{\infty}$ is a natural extension of the embedding $A\rtimes_{\alpha}G \to (A\rtimes_{\alpha}G)_{\infty}$. 
 
Note that  $(\widetilde{\phi}\rtimes G) \circ (1_{C(G)}\otimes id_A)\rtimes G= [1_{A_{\infty}}-\phi(1_{C(G)})]u_e \iota_{A\rtimes_{\alpha}G} $ and $[1_{A_{\infty}}-\phi(1_{C(G)})]u_e$ is Murray-von Neumann equivalent to a projection in $\overline{z (A_{\infty} \rtimes_{\alpha_{\infty}} G)z}$. Thus  $(1_{C(G)}\otimes id_A)\rtimes G$ is tracially sequentially split.\\
\end{proof}

Most of the following results are already known due to  Archey \cite{Ar}, Hirshberg and Orovitz \cite{HO},  Kishimoto \cite{Kishi4}, Lin and Osaka \cite{LO:tracial},  Osaka and Teruya \cite{OT1}, Phillips \cite{Phillips:tracial}. We believe that it is meaningful to give a new and unified proof of scattered results using the new conceptual approach. 

\begin{cor}
Let $G$ be a finite group and $A$ be an infinite dimensional separable simple unital finite $C\sp*$-algebra. Suppose that $\alpha:G \curvearrowright A$ has the tracial Rokhlin property. Then if $A$ has the following properties, then so does $A\rtimes_{\alpha} G$.  
\begin{enumerate}
\item  stable rank one, 
\item  real rank zero, 
\item $TA\,\mc{C}$ or $TA \, \mc{S}$,
\item  $\mc{Z}$-absorbing provided that $A$ is nuclear,
\item strict comparison.
\end{enumerate}
\end{cor}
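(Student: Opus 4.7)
The strategy is to apply, one property at a time, the permanence theorems of Section~2 to the tracially sequentially-split $*$-homomorphism
\[
\Phi := (1_{C(G)}\otimes \id_A)\rtimes G \colon A\rtimes_{\alpha}G \longrightarrow (C(G)\otimes A)\rtimes_{\sigma\otimes\alpha}G
\]
furnished by the previous corollary. As a preparatory step, I would invoke the standard Green-type isomorphism
\[
(C(G)\otimes A)\rtimes_{\sigma\otimes\alpha}G \;\cong\; (C(G)\rtimes_{\sigma}G)\otimes A \;\cong\; M_{|G|}(A),
\]
so that $\Phi$ is realized as a tracially sequentially-split map from $A\rtimes_{\alpha}G$ into the matrix amplification $M_{|G|}(A)$.

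Before applying each permanence theorem I would verify that $M_{|G|}(A)$ inherits the property in question from $A$. This is classical for simplicity, stable rank one, real rank zero, stable finiteness, strict comparison, and $\mc{Z}$-absorption; for $TA\,\mc{C}$ and $TA\,\mc{S}$ it is built into the closure axioms of Definition~\ref{D:C} and Definition~\ref{D:S}, which impose closure of $\mc{C}$ (resp.\ $\mc{S}$) under matrix amplifications.

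With these two ingredients in place the remaining work is bookkeeping. Theorem~\ref{T:simple} yields (1), and its stably-finite half yields (6); Theorems~\ref{T:TSR1} and~\ref{T:RR0} yield the two halves of (2); Theorem~\ref{T:TAC} together with its $TA\,\mc{S}$ analogue yields (3); Theorem~\ref{T:Z-absorbing} yields (4), using that nuclearity passes to crossed products by finite groups and that for simple nuclear unital $C\sp*$-algebras tracial $\mc{Z}$-absorption coincides with $\mc{Z}$-stability; and the strict-comparison theorem at the end of Section~2 yields (5). The only step that demands some care, and which I regard as the main (albeit routine) obstacle, is the verification that the identification $(C(G)\otimes A)\rtimes_{\sigma\otimes\alpha}G \cong M_{|G|}(A)$ is compatible with the tracial and Cuntz-comparison data appearing in the hypotheses of those theorems; this is because the isomorphism is obtained from the spectral decomposition of $C(G)\rtimes_{\sigma}G \cong M_{|G|}$ and is unitally implemented on the image of $A\rtimes_\alpha G$, so traces and Murray--von~Neumann/Cuntz relations transport in the expected way.
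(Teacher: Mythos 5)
Your proposal is correct and follows essentially the same route as the paper: identify the target algebra $(C(G)\otimes A)\rtimes_{\sigma\otimes\alpha}G$ with $M_{|G|}(A)$, observe that each listed property passes from $A$ to its matrix amplification, and then pull the property back along the tracially sequentially-split map supplied by the preceding corollary via the permanence theorems of Section~2. The paper's own proof is just a terser version of this same bookkeeping.
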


\begin{proof}
Note that $A\rtimes_{\alpha}G$ is simple by \cite[Corollary 1.6]{Phillips:tracial}. 
Since 
\[ \begin{split}
 (C(G)\otimes A) \rtimes_{\sigma\otimes \alpha} G &\simeq (C(G)\rtimes_{\sigma}G)\otimes A\\
&\simeq M_{|G|}(\mathbb{C})\otimes A, 
\end{split}\] 
$(C(G)\otimes A) \rtimes_{\sigma\otimes \alpha} G$ is finite when $A$ has stable rank one. Then $A\rtimes_{\alpha}G$ is finite by Theorem \ref{C:GTSS}. 
 Similarly the remaining conclusions follow from  \ref{T:TSR1}, \ref{T:RR0}, \ref{T:TAC}, \ref{T:TAS}, \ref{T:Z-absorbing} and \ref{T:Strictcomparision}, and Theorem \ref{C:GTSS}.
\end{proof}

Another important notion including the group action on a $C\sp*$-algebra as an example is an inclusion of $C\sp*$-algebras of index-finite type due to Watatani \cite{Watatani:index}.  We briefly recall the definition and related properties used later. 

\begin{defn}[Watatani\cite{Watatani:index}]
Let $P\subset A$ be an inclusion of  unital $C\sp*$-algebras and $E:A \to P$ be an conditional expectation. Then we way that $E$ has a quasi-basis if there exist elements $u_k,v_k$ for $k=1,\dots, n$ such that 
\[ x=\sum_{j=1}^nu_jE(v_jx)=\sum_{j=1}^n E(xu_j)v_j.\]
In this case, we define the Watatani index  of $E$ as 
\[\Index E= \sum_{j=1}^n u_jv_j.\] In other words, we say that $E$ has a finite index if there exists a quasi-basis. 
\end{defn}
It is proved that a quasi-basis can be chosen as $\{(u_1, u_1^*), \dots, (u_n, u_n^*)\}$ so that $\Index E$ is a nonzero positive element in $A$ commuting with $A$.  Thus if $A$ is simple,  it is a nonzero positive scalar. From now on we say that a condition expectation $E$ is of index-finite type if $E$ has a finite index. 

  \begin{defn}[Osaka and Teruya \cite{OT1}]
Let $P\subset A$ be an inclusion of separable unital $C\sp*$-algebras such that a conditional expectation $E:A\to P$ is of index-finite type. We say $E$ has the tracial Rohklin property if for any nonzero element $z\in A^{++}_{\infty}$ there is a Rohklin projection $e\in A_{\infty}\cap A'$ such that 
\begin{enumerate}
\item $(\Index E) E_{\infty}(e)=g$ is a nonzero projection in $P_{\infty}$,
\item $1-g$ is Murray-von Neumann equivalent to a projection in the hereditary subalgebra of $A_{\infty}$ generated by $z$ in $A_{\infty}$, 
\item $A\ni x \to xe \in A_{\infty}$ is injective. 
\end{enumerate} 
\end{defn}

As we notice, the third condition is automatically satisfied when $A$ is simple. As in the case of a finite group action with the tracial Rohklin property, if $P\subset A$ is an inclusion of simple $C\sp*$-algebras and a conditional expectation $E:A \to P$ of index-finite type has the tracial Rohklin property, or shortly $P\subset A$ an inclusion with the tracial Rokhlin property, then either $A$ has Property (SP) or $E$ has the Rokhlin property (see \cite[Lemma 4.3]{OT1}). A typical example arises from a finite group action $\alpha$ of $G$ on a $C\sp*$-algebra $A$; let $A^{\alpha}$ be the fixed point algebra then the conditional expectation 
\begin{equation}\label{E:expectation}
E(a)=\frac{1}{|G|} \sum_{g\in G} \alpha_g(a) 
\end{equation}
  is of index-finite type if the action $\alpha:G \curvearrowright A$ is outer. Moreover,  the following observation was obtained by the second author and T. Teruya in \cite{OT1}. 

\begin{prop}\cite[Proposition 4.6]{OT1}\label{P:actiontoinclusion}
Let $G$ be a finite group and $A$ an infinite dimensional separable finite simple unital $C^*$-algebra $A$, and E as in (\ref{E:expectation}).  Then $\alpha: G \curvearrowright A$ has the tracial Rokhlin property if and only if $E$ has the tracial Rokhlin property.
\end{prop}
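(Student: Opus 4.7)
The plan is to prove both directions using the equivalent characterization of the tracial Rokhlin property of $\alpha$ given in Theorem \ref{T:tracialRokhlinaction}, together with the observation that for the canonical conditional expectation $E(a) = \frac{1}{|G|} \sum_g \alpha_g(a)$ one has $\Index E = |G|$, so that $(\Index E)\,E_\infty(e) = \sum_{g \in G} \alpha_{\infty, g}(e)$ for every $e \in A_\infty$.

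For the forward direction, let $z \in P_\infty$ be nonzero positive. Viewing $z$ as a nonzero positive element of $A_\infty$, Theorem \ref{T:tracialRokhlinaction} yields mutually orthogonal projections $\{e_g\}_{g \in G} \subset A_\infty \cap A'$ with $\alpha_{\infty, g}(e_h) = e_{gh}$ and $1 - \sum_g e_g$ Murray--von Neumann equivalent to a projection in $\overline{z A_\infty z}$. Setting $e := e_1$, the identity $(\Index E)\,E_\infty(e) = \sum_g \alpha_{\infty, g}(e_1) = \sum_g e_g$ shows this is a projection whose complement satisfies the required subequivalence. Injectivity of $x \mapsto xe$ is automatic since $A$ is simple and $e \neq 0$.

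For the reverse direction, given a nonzero positive $x \in A_\infty$, the key intermediate step is the existence of a nonzero positive $z \in P_\infty$ with $z \lesssim x$ in $A_\infty$. Granted this, apply the tracial Rokhlin property of $E$ to $z$ to obtain $e \in A_\infty \cap A'$ with $q := (\Index E)\, E_\infty(e) = \sum_g \alpha_{\infty, g}(e)$ a projection and $1 - q \lesssim z \lesssim x$. Define $e_g := \alpha_{\infty, g}(e)$. Each $e_g \leq q$ implies $e_g q = e_g$, and expanding $e_g = e_g q = e_g^2 + \sum_{h \neq g} e_g e_h$ gives $e_g y = 0$ for $y := \sum_{h \neq g} e_h \geq 0$; since $e_h \leq y$, this forces $e_g e_h e_g = 0$ and hence $e_g e_h = 0$ for $g \neq h$. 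Covariance $\alpha_{\infty, h}(e_g) = e_{hg}$ is immediate, and each $e_g \in A_\infty \cap A'$ follows from $[\alpha_{\infty, g}(e), a] = \alpha_{\infty, g}([e, \alpha_{g^{-1}}(a)]) = 0$ for $a \in A$. The bound $1 - \sum_g e_g = 1 - q \lesssim x$ completes the verification.

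The main obstacle is the key step: locating a nonzero positive $z \in P_\infty$ that is Cuntz-below $x$ in $A_\infty$. A first reduction is that we may assume $A$ has property (SP) (by the dichotomy analogous to \cite[Lemma 4.3]{OT1}; otherwise $E$ has the strict Rokhlin property and the conclusion follows from the non-tracial case). Under (SP), replace $x$ by a nonzero projection $p \in \overline{x A_\infty x}$. The task reduces to finding a nonzero projection in $P_\infty$ Cuntz-dominated by $p$ in $A_\infty$, which can be achieved using that the outerness of $\alpha$, forced by the existence of Rokhlin data for $E$, makes $E$ faithful and $P$ rich enough in $A$ (in the Cuntz sense) that an averaging combined with standard $f_\epsilon$-manipulations produces the desired nonzero invariant element.
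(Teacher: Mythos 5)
The paper does not actually prove this proposition; it is imported verbatim from \cite[Proposition 4.6]{OT1}, so there is no internal argument to compare yours against. On its own terms, your outline follows the natural (and standard) route, and most of it is sound: the identity $(\Index E)E_\infty(e)=\sum_g\alpha_{\infty,g}(e)$ is legitimate because the Rokhlin data on either side forces pointwise outerness, hence saturation of the action and $\Index E=|G|$; a finite sum of projections that is again a projection does consist of mutually orthogonal summands, and your expansion argument for that is correct; the covariance and centrality checks are fine. In the forward direction you should add a word on why $e_1\neq 0$ (needed for the injectivity condition in the definition of the tracial Rokhlin property of $E$): first shrink $z$ to a nonzero, non-invertible positive element of $\overline{zP_\infty z}$, and note that $e_1=0$ would force $1\sim r$ for a projection $r$ in a proper hereditary subalgebra, contradicting finiteness of $A_\infty$ (which follows from finiteness of $A$).

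The reverse direction has a genuine gap at exactly the step you call the main obstacle, and the mechanism you sketch for it does not work. You need, for each nonzero positive $x\in A_\infty$, a nonzero positive $z\in P_\infty$ with $z\lesssim x$ in $A_\infty$. Averaging goes the wrong way: for a nonzero projection $p\in\overline{xA_\infty x}$, the Pimsner--Popa inequality for an index-finite expectation gives $|G|\,E_\infty(p)\geq p$, so the invariant element produced by averaging is Cuntz \emph{above} $p$, not below it, and no $f_\epsilon$-manipulation of $E_\infty(p)$ alone can reverse this, since $\alpha_{\infty,g}(p)$ bears no comparison relation to $p$ in general. What is actually required is a comparison lemma of the same nature as Lemma \ref{L:projection} but with the roles reversed (for the inclusion $A^\alpha\subset A$ rather than $A\subset A\rtimes_\alpha G$): for an outer action of a finite group on an infinite-dimensional simple unital $C^*$-algebra with property (SP), every nonzero hereditary subalgebra of $A$ contains a projection Murray--von Neumann equivalent in $A$ to a nonzero projection of $A^\alpha$. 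Its proof runs through simplicity of $A\rtimes_\alpha G$ and the Morita equivalence between $A^\alpha$ and $A\rtimes_\alpha G$ (this is where the corresponding lemmas of \cite{OT1} and \cite{Phillips:tracial} do their work); faithfulness of $E$ and "richness" of $P$ are far from sufficient. Until such a lemma is stated and proved, or correctly cited and applied levelwise to representing sequences, the reverse implication is not established.
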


 We note that in this case $A^{\alpha}$ is strongly Morita equivalent to $A\rtimes_{\alpha}G$, thus if an approximation property is preserved by the  strong Morita equivalence,  and if the inclusion $A^{\alpha} \subset A$ of index-finite type is tracially sequentially-split, then such an  approximation property can be transferred to $A\rtimes_{\alpha}G$ from $A$ when $\alpha$ has the tracial Rokhlin property.   

To show that the inclusion map $P \hookrightarrow A $ is tracially sequentially-split, we begin to restate \cite[Lemma 4.10]{OT1} as follows. 
\begin{lem}\cite[Lemma 4.19]{LeeOsaka}\label{L:inclusiontechnical}
Let $p,q$ be two projections in $P_{\infty}$ and $e\in A_{\infty}\cap A'$ be a projection such that $(\Index E) E_{\infty}(e)$ is a projection in $P_{\infty}\cap P'$. If $pe=ep$ and $q\lesssim pe$ in $A_{\infty}$, then $q \lesssim p$ in $P_{\infty}$
\end{lem}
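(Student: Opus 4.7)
The plan is to extract from $q\lesssim pe$ in $A_\infty$ a partial isometry $v\in A_\infty$ with $v^*v=q$ and $vv^*\le pe$, and then use $(\Index E)\,E_\infty$ to push this data down to witness $q\lesssim p$ in $P_\infty$.

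First, since $p$ and $e$ commute, $pe$ is a projection in $A_\infty$; because $q$ is a projection with $q\lesssim pe$, I can choose a partial isometry $v\in A_\infty$ with $v^*v=q$ and $vv^*\le pe$. From $vv^*\le pe\le e$ we have $e\cdot vv^*=vv^*$, and right-multiplying by $v$ gives $ev=v$; analogously $vv^*\le pe\le p$ yields $pv=v$. Taking adjoints, $v^*e=v^*=v^*p$. Thus $v=pev$, and $v$ is ``supported under'' the corner $pe$.

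Next, set $R:=(\Index E)\,E_\infty(vv^*)\in P_\infty$. Using the $P_\infty$-bilinearity of $E_\infty$ together with $pe=ep$,
\[ R\le(\Index E)\,E_\infty(pe)=p\cdot(\Index E)\,E_\infty(e)=pg,\]
where $g=(\Index E)\,E_\infty(e)\in P_\infty\cap P'$ is the projection from the hypothesis. Since $g$ commutes with $p$, $pg$ is a projection with $pg\le p$, whence $R\lesssim p$ in $P_\infty$.

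The main step is to establish $q\lesssim R$ in $P_\infty$. The Pimsner--Popa inequality extends from $A$ to $A_\infty$ and gives $(\Index E)\,E_\infty(vv^*)\ge vv^*$, so $q=v^*v\sim vv^*\lesssim R$ holds at least in $A_\infty$. To upgrade this to $P_\infty$, I fix a quasi-basis $\{u_j\}_{j=1}^n$ for $E$ and expand $v=\sum_j u_jb_j$ with $b_j:=E_\infty(u_j^*v)\in P_\infty$. The identities $ev=v$ and $pv=v$ let one rewrite both $q=v^*v$ and $R=(\Index E)E_\infty(vv^*)$ as $P_\infty$-valued expressions in the $b_j$ and the $M_n(P_\infty)$-matrix $\bigl(E_\infty(u_j^*eu_k)\bigr)_{j,k}$; the hypothesis that $g=(\Index E)\,E_\infty(e)$ is a projection commuting with $P_\infty$ is exactly the structural input needed to assemble from this a sequence in $P_\infty$ realizing $q\lesssim R$. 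Combining the two reductions yields $q\lesssim R\lesssim p$ in $P_\infty$.

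The main obstacle is the third step. Cuntz subequivalence in $A_\infty$ between two elements of the subalgebra $P_\infty$ does not in general descend to $P_\infty$, so the argument cannot simply quote the $A_\infty$-level chain $q\sim vv^*\le R$. It must genuinely use both the finite Watatani index, which linearizes $v$ through the quasi-basis, and the Rokhlin-type assumption on $e$, which enters through the projection $g$ and the commutation $pe=ep$.
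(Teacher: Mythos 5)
The paper gives no proof of this lemma at all: it is explicitly a restatement of \cite[Lemma 4.10]{OT1}, so your argument has to stand on its own, and it does not. Your first two steps are correct but carry almost no content. Extracting $v$ with $v^*v=q$ and $vv^*\le pe$ is just the standard equivalence of Cuntz and Murray--von Neumann subequivalence for projections, and $R=(\Index E)E_\infty(vv^*)\le(\Index E)E_\infty(pep)=p\bigl((\Index E)E_\infty(e)\bigr)p=pgp\le p$ already gives $R\lesssim p$ by domination alone. (Two small inaccuracies here: the hypothesis places $g$ in $P_\infty\cap P'$, so $g$ commutes with $P$, not with all of $P_\infty$ as you later assert; the identity $pg=gp$ does hold, but only because applying $E_\infty$ to $pe=ep$ forces $pE_\infty(e)=E_\infty(e)p$, not by fiat.) The entire content of the lemma is the descent of the subequivalence from $A_\infty$ to $P_\infty$, and that is precisely the step you do not carry out: the passage beginning ``the identities $ev=v$ and $pv=v$ let one rewrite\dots'' and ending ``\dots is exactly the structural input needed to assemble from this a sequence in $P_\infty$ realizing $q\lesssim R$'' is a description of a hoped-for proof, not a proof.

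Moreover the sketch, as written, does not go through. Expanding $v=\sum_j u_jb_j$ with $b_j=E_\infty(u_j^*v)\in P_\infty$ gives $q=v^*ev=\sum_{j,k}b_j^*(u_j^*eu_k)b_k$, and the middle factors $u_j^*eu_k$ lie in $A_\infty$, not in $P_\infty$; nothing you have established permits replacing them by $E_\infty(u_j^*eu_k)$, which is what your claimed ``$P_\infty$-valued expression'' silently requires. The mechanism that is actually needed is a compression identity for the Rokhlin projection of the form $xe=\beta(x)e$ with $\beta(x)=(\Index E)E_\infty(xe)\in P_\infty$ --- exactly the property the paper itself quotes from \cite[Lemma 4.7]{OT1} in the proof of Corollary 3.13 --- which is what converts a witness of $q\lesssim pe$ living in $A_\infty$ into one living in $P_\infty$. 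Your proposal never states, proves, or uses such an identity, so the decisive implication $q\lesssim R$ (or $q\lesssim p$) \emph{in $P_\infty$} remains unproved. The appeal to Pimsner--Popa only yields $vv^*\le R$ in $A_\infty$, which, as you yourself note, is not enough.
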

The following theorem has already appeared in \cite{LeeOsaka} without a proof. We provide a proof here. 
\begin{thm}\cite[Lemma 4.20]{LeeOsaka}\label{T:tracialinclusion}
Let $P \subset A$ be inclusion of simple $C\sp*$-algebras and $A$ be separable. Suppose $E:A \to P$ has the tracial Rokhlin property.  Then for any nonzero positive element $z \in P^{++}_{\infty}$,  there exists a projection $e$ in the central sequence algebra of $A$ such that $(\Index E)E_{\infty}(e)=g$ is a projection such that $1-g$ is Murray-von Neumann equivalent to a projection in $\overline{zP_{\infty}z}$ in $P_{\infty}$.  
\end{thm}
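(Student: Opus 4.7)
The plan is to feed the given nonzero positive $z \in P_\infty$ directly into the tracial Rokhlin hypothesis for $E$ and then transfer the resulting Cuntz comparison from $A_\infty$ to $P_\infty$ by means of Lemma~\ref{L:inclusiontechnical}. First, the tracial Rokhlin property of $E$ produces a projection $e \in A_{\infty} \cap A'$ with $g := (\Index E) E_\infty(e)$ a projection and with $1-g$ Murray--von Neumann equivalent in $A_\infty$ to some projection $q_0 \in \overline{z A_\infty z}$. A short computation using that $e$ commutes with $A$ and that $E_\infty$ is $P$-bilinear gives, for every $x \in P$, the identity $E_\infty(e)\,x = E_\infty(ex) = E_\infty(xe) = x\,E_\infty(e)$, so in fact $g \in P_\infty \cap P'$ and hence $1-g$ is a projection in $P_\infty \cap P'$.

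Next, to upgrade the $A_\infty$-equivalence $1-g \sim q_0$ to a $P_\infty$-equivalence with a projection sitting inside $\overline{z P_\infty z}$, I intend to apply Lemma~\ref{L:inclusiontechnical} with the choice $q := 1-g$. This reduces the problem to producing a projection $p \in P_\infty$, contained in $\overline{z P_\infty z}$, satisfying (a) $pe = ep$ and (b) $1-g \lesssim pe$ in $A_\infty$. I plan to build $p$ from $z$ via a spectral cut-off applied to a representing sequence $(z_n)$ in $P$, so that $p$ lies in $\overline{z P_\infty z}$ while remaining Cuntz-comparable to $z$. To secure (a), I re-select the Rokhlin projection against an enlarged family of finite sets $F_n \subset A$ that additionally contain a representing sequence $(p_n)$ of $p$; the resulting representing sequence $(e_n)$ of $e$ satisfies $\|[e_n, p_n]\| \to 0$, so $pe = ep$ holds in $A_\infty$. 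Property (b) is then read off by combining $1-g \sim q_0 \lesssim z$ in $A_\infty$ (from the Rokhlin conclusion) with the fact that $p$ is designed to dominate $z$ up to an error compressible into a hereditary piece orthogonal to $e$.

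Once the hypotheses of Lemma~\ref{L:inclusiontechnical} are verified, one concludes $1-g \lesssim p$ in $P_\infty$; then a standard cut-down inside the hereditary $C^*$-subalgebra $\overline{z P_\infty z}$ produces a subprojection $q' \le p$ with $1-g \sim q'$ in $P_\infty$, which is exactly the conclusion sought. The main obstacle is the simultaneous coordination of the three constraints on $p$: membership in $\overline{z P_\infty z}$, exact commutation with $e$ in $A_\infty$ (rather than the automatic commutation $e$ enjoys only with \emph{constant} sequences from $A$), and the Cuntz dominance $1-g \lesssim pe$ in $A_\infty$. The flexibility of the tracial Rokhlin property — namely that $e$ may be chosen against any prescribed finite set in $A$ — together with a diagonal selection of the representing sequences for $e$ and $p$ is the crucial technical ingredient that makes all three demands compatible.
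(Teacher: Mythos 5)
Your overall strategy---reduce to Lemma~\ref{L:inclusiontechnical} and transfer the Cuntz comparison from $A_\infty$ to $P_\infty$---is the right one, and your observation that $g=(\Index E)E_\infty(e)$ lies in $P_\infty\cap P'$ is correct. The gap is in your step (b): Lemma~\ref{L:inclusiontechnical} needs $1-g\lesssim pe$ in $A_\infty$, and you propose to extract this from $1-g\lesssim z$ together with ``$p$ dominates $z$.'' That chain only yields $1-g\lesssim p$, and there is no passage from $\lesssim p$ to $\lesssim pe$: the corner $pe$ is genuinely smaller than $p$ (heuristically $\tau(pe)\approx\tau(p)/\Index E$ for a trace $\tau$), so a projection subequivalent to $p$ need not be subequivalent to $pe$. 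Compressing ``an error into a hereditary piece orthogonal to $e$'' does not address this, since all of $p(1-e)$ is lost in the cut-down. Nor can you repair it by feeding $pe$ into the tracial Rokhlin hypothesis, because $e$ is the output of that hypothesis---the dependence is circular.

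The paper breaks this circularity with a two-stage construction that your plan is missing. First, using property (SP) of $P_\infty$, it chooses a projection $q\in\overline{zP_\infty z}$ (your ``spectral cut-off'' of $z$ would not in general produce a projection), and by a diagonal argument over Rokhlin projections attached to a representing sequence $(q_k)$ of $q$ it builds an auxiliary projection $e_1\in A_\infty\cap A'$ with $e_1q=qe_1$ and $(\Index E)E_\infty(e_1)$ a projection; no Cuntz estimate is demanded of $e_1$. It then applies the tracial Rokhlin property a \emph{second} time, to the positive element $qe_1$ itself, obtaining a new Rokhlin projection $e_2$ with $g_2=(\Index E)E_\infty(e_2)$ a projection and $1-g_2\lesssim qe_1$ directly from the definition. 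Lemma~\ref{L:inclusiontechnical}, applied with $e=e_1$ and $p=q$, then gives $1-g_2\lesssim q$ in $P_\infty$, and the projection asserted by the theorem is $e_2$, not $e_1$. In short, the single projection $e$ of your plan must be split into two: one that commutes with $q$ and satisfies the hypothesis on $E_\infty$, and a second whose defect $1-g_2$ is compared against $q$ times the first.
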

\begin{proof}
We may assume $P$ has Property (SP).  Consider a nonzero positive element $z$ in $P^{++}_{\infty}$. Following the proof of Lemma \ref{L:SP}  we can construct a nonzero projection $q$ in $\overline{zP_{\infty}z} \cap P_{\infty}^{++}$.  Write $q=[(q_k)_k]$ where each $q_k$ is a projection in $P$. 
For each $q_k$ which is full in $A_{\infty}$, then there exists a full projection $e_k \in A_{\infty}\cap A'$ such that $(\Index E)E_{\infty}(e_k)$ is a projection in $P_{\infty}$. Write $e_k=[(e^k_n)_n]$ where $e_n^k \in A^{+}$ for $n=1,2,\dots$. Then 
\begin{align*}
&\limsup_n \|(\Index E)E(e^k_n)(\Index E)E(e^k_n)-(\Index E)E(e^k_n) \|=0,\\
&\limsup_n \| e^k_nq_k -q_ke_n^k \| =0, \\
&\limsup_n \| e^k_n a -ae^k_n \|=0 \quad \text {for  $a \in A$}.
\end{align*}
Now let $F_k$'s be  increasing finite sets such that $\overline{\cup_{k=1}^{\infty}F_k}=A$.
Then for each $k \in \mathbb{N}$, we can choose a subsequence of increasing $n_k$'s such that 
\begin{align*}
 &\|(\Index E)E(e^k_{n_k})(\Index E)E(e^k_{n_k})-(\Index E)E(e^k_{n_k}) \| \le 1/2^k, \\
& \| e^k_{n_k}q_k -q_ke_{n_k}^k \| \le 1/2^k,\\
&\| e^k_{n_k}a -ae^k_{n_k}\| \le 1/2^k \quad \text{for $a \in F_k$}.
\end{align*}
Then  $e_1=[(e^k_{n_k})_k]$ is a projection of $A_{\infty}\cap A'$ such that $e_1 q=qe_1$ and $(\Index E)E_{\infty}(e_1)$ is a nonzero projection. Note that $e_1q \in A^{++}_{\infty}$. Now we take another Rokhlin projection $e_2\in A_{\infty}\cap A'$ such that $(\Index E)E_{\infty}(e_2)=g_2$ is a projection and $1-g_2 \lesssim qe_1$ in $A_{\infty}$.
By Lemma \ref{L:inclusiontechnical},  $1-g_2 \lesssim q$ in $P_{\infty}$. It follows that $1-g_2$ is Murray- von Neumann equivalent to a projection $\overline{qP_{\infty}q} \subset \overline{zP_{\infty}z}$ in $P_{\infty}$. 
\end{proof}

\begin{cor}\cite[Proposition 4.22]{LeeOsaka}\label{C:TSS}
Let $P\subset A$ be an inclusion of unital $C\sp*$-algebras where $A$ is simple and separable and a conditional expectation $E:A \to P$ of index-finite type has the tracial Rokhlin property. Then the inclusion $P \hookrightarrow A$ is a tracially sequentially-split $*$-homomorphism.  
\end{cor}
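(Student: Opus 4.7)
The plan is to take the Rokhlin data from the preceding theorem and package it into a tracially approximate left inverse $\psi : A \to P_\infty$. For a fixed nonzero positive $z \in P_\infty$, the preceding theorem produces a projection $e \in A_\infty \cap A'$ for which $g := (\Index E)\,E_\infty(e)$ is a projection in $P_\infty$ and $1_{P_\infty} - g$ is Murray-von Neumann equivalent, inside $P_\infty$, to a projection in $\overline{z P_\infty z}$.

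A short preliminary step is to observe that $g$ actually lies in $P_\infty \cap P'$, as the definition of a tracially sequentially-split map demands. For any $p \in P$, using the $P$-bimodularity of $E_\infty$ and the fact that $e \in A_\infty \cap A'$ commutes with $p \in A$, one has
$$pg = (\Index E)\,E_\infty(pe) = (\Index E)\,E_\infty(ep) = gp.$$
My candidate for the $*$-homomorphism is then
$$\psi(a) := (\Index E)\,E_\infty(ae), \qquad a \in A,$$
which is linear, self-adjoint, and $P_\infty$-valued; repeating the bimodularity computation gives $\psi(p) = pg$ for $p \in P$ and in particular $\psi(1_A) = g$.

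The substantive step, and the one I expect to be the main obstacle, is verifying that $\psi$ is multiplicative. The plan is to expand $b \in A$ via a self-adjoint quasi-basis $\{(u_j, u_j^*)\}_j$, writing $b = \sum_j u_j E(u_j^* b)$, and to reduce the identity $\psi(ab) = \psi(a)\psi(b)$ to the Watatani analogue of the orthogonality relations $e_g e_h = \delta_{g,h}\,e_g$ enjoyed by Rokhlin projections in the finite-group setting. This step genuinely uses the Rokhlin property of $e$ beyond the bare statement that $(\Index E)\,E_\infty(e)$ is a projection, and follows the template of the strict-Rokhlin inclusion case carried out by Osaka-Teruya \cite{OT1} and Barlak-Szab\'o \cite{BS}. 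Once multiplicativity is in hand, the conclusion is immediate: $\psi(1_A) = g$, so $1_{P_\infty} - \psi(1_A) = 1 - g$ is Murray-von Neumann equivalent to a projection in $\overline{z P_\infty z}$ by the choice of $e$, and since $z$ was arbitrary the inclusion $\iota : P \hookrightarrow A$ is tracially sequentially-split.
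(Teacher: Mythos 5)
Your proposal is correct and follows essentially the same route as the paper: the same map $\psi(a)=(\Index E)\,E_\infty(ae)$ built from the Rokhlin projection of the preceding theorem, the same bimodularity computation giving $\psi(p)=pg$ and $1-\psi(1)=1-g\precsim$ a projection in $\overline{zP_\infty z}$, with multiplicativity deferred to the Osaka--Teruya machinery (the paper cites the uniqueness property $xe=\beta(x)e$ from \cite[Lemma 4.7]{OT1}, which is exactly what your quasi-basis expansion would establish).
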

\begin{proof}
Let $z$ be a nonzero element $P^{++}_{\infty}$. Then there exists a projection $e \in A_{\infty}\cap A'$ such that $(\Index E)E_{\infty}(e)=g$ is a projection and $1-g$ is Murray- von Neumann equivalent to a projection $r \in \overline{zP_{\infty}z}$ in $P_{\infty}$ by Theorem \ref{T:tracialinclusion}.  Then we set up a map $\beta(a)=(\Index E)E_{\infty}(ae)$.  Note that $\beta(p)=(\Index E)E_{\infty}(pe)=p g$ and $1-\beta(1)=1-g$ is Murray-von Neumann equivalent to a projection in $\overline{zP_{\infty}z}$ in $P_{\infty}$. It remains to show that $\beta$ is a $*$-homomorphism. But it follows from a property that $\beta(x)$ is a unique element  in  $P_{\infty}$ such that $xe=\beta(x)e$ for $x \in A$ (see \cite[Lemma 4.7]{OT1} for more details). 
\end{proof}
\begin{cor}
Let $P \subset A$ be an inclusion of separable unital simple $C\sp*$-algebras and assume that a conditional expectation $E:A \to P$ has the tracial Rohklin property.  If $A$ satisfies one the following properties, then $P$ does too. 
\begin{enumerate}
\item finite or stable rank one,
\item  real rank zero, 
\item $TA\,\mc{C}$ or  $TA \, \mc{S}$,
\item $\mc{Z}$-absorbing provided that $A$ is nuclear,
\item strict comparison.
\end{enumerate}

\end{cor}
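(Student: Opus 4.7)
The plan is to reduce the corollary to a sequence of applications of the permanence theorems of Section~2, with the bridge provided by the previous corollary. First I would invoke that preceding corollary to obtain, from the hypothesis that $E\colon A\to P$ of finite Watatani index has the tracial Rokhlin property (with $A$ simple, as is the case in every item below, or as implicitly required for (3)), a unital tracially sequentially-split $*$-homomorphism $\iota\colon P\hookrightarrow A$.

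Once this bridge is in place, I would dispatch the six items by direct citation, treating $A$ as the target and $P$ as the domain. For (1) and (6), Theorem~\ref{T:simple} transfers simplicity and stable finiteness from $A$ to $P$. For (2), Theorems~\ref{T:TSR1} and~\ref{T:RR0} give stable rank one and real rank zero, respectively, using that the target $A$ is simple by assumption. For (3), Theorem~\ref{T:TAC} and its companion for the class $\mc{S}$ show that $\TA\,\mc{C}$ and $\TA\,\mc{S}$ descend to $P$. For (4), Theorem~\ref{T:Z-absorbing} yields tracial $\mc{Z}$-absorption of $P$, which coincides with $\mc{Z}$-absorption under the standing simplicity and nuclearity hypotheses. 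For (5), the strict comparison theorem immediately preceding Section~3 applies verbatim.

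I do not anticipate any obstacle here, since the genuine technical work has already been carried out in the theorem and corollary preceding this one: the Rokhlin projection $e\in A_{\infty}\cap A'$ was assembled so that $a\mapsto (\Index E)E_{\infty}(ae)$ is a well-defined $*$-homomorphism $A\to P_{\infty}$ serving as a tracially approximate left inverse of $\iota$, with the required Murray--von Neumann control on $1-(\Index E)E_{\infty}(e)$ by an arbitrarily prescribed positive element of $P_{\infty}$. The present corollary is therefore a bookkeeping aggregation of the section's main theorems, applied through this single bridge.
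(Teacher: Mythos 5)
Your proposal is correct and follows exactly the paper's own (very terse) argument: the paper's proof simply cites Corollary 3.11 to obtain the tracially sequentially-split inclusion $P\hookrightarrow A$ and then invokes the permanence theorems of Section 2 (Theorems 2.7, 2.8, 2.10, 2.12, 2.16, and 2.19). Your additional remark that simplicity of $A$ is needed to apply the preceding corollary is a fair and accurate reading of the hypotheses.
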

\begin{proof}
It follows from Theorems \ref{T:TSR1}, \ref{T:RR0}, \ref{T:TAC}, \ref{T:TAS}, \ref{T:Z-absorbing} and \ref{T:Strictcomparision}, and Corollary \ref{C:TSS}.

\end{proof}
\section{Acknowledgements}
This research was carried out during the first author's stay at KIAS and his visit to Ritsumeikan University. He would like to appreciate both institutions for excellent supports. Both authors are grateful to a colleague for pointing out a flaw in our original definition about ``tracially sequentially-split $*$-homomorphism'' and Chris Phillips for suggesting an outline to fill a gap in the proof of Theorem \ref{T:Strictcomparision}. In addition, we are grateful to T. Teruya for a helpful discussion. 


\end{document}